\def\ps@pprintTitle{%
 \let\@oddhead\@empty
 \let\@evenhead\@empty
 \def\@oddfoot{}%
 \let\@evenfoot\@oddfoot}
\newtheorem{theorem}{Theorem}
\newtheorem{example}{Example}
\newtheorem{corollary}{Corollary}
\newtheorem{lemma}{Lemma}
\newtheorem{claim}{Claim}
\newtheorem{proposition}{Proposition}
\newtheorem{question}{Question}
\numberwithin{equation}{section}
\newtheorem{conjecture}{Conjecture}
\theoremstyle{definition}
\newtheorem{definition}{Definition}
\begin{document}
\title{On the additive complexity of a Thue-Morse like sequence}
\author[hzau]{Jin Chen}
\address[hzau]{College of Science, Huazhong Agricultural University, Wuhan 430070, China.}
\ead{cj@mail.hzau.edu.com}
\author[hust]{Zhixiong Wen}
\address[hust]{School of Mathematics and Statistics, Huazhong University of Science and Technology, Wuhan, 430074, China.}
\ead{zhi-xiong.wen@hust.edu.cn}
\author[scut]{Wen Wu\corref{corres}}
\cortext[corres]{Wen Wu is the corresponding author.}
\address[scut]{School of Mathematics, South China University of Technology, Guangzhou 510641, China.}
\ead{wuwen@scut.edu.cn}

\begin{abstract}
In this paper, we study the additive complexity $\rho^{+}_{\mathbf{t}}(n)$ of a Thue-Morse like sequence $\mathbf{t}=\sigma^{\infty}(0)$ with the morphism $\sigma: 0\to 01, 1\to 12, 2\to 20$. We show that $\rho^{+}_{\mathbf{t}}(n)=2\lfloor\log_2(n)\rfloor+3$ for all integers $n\geq 1$. Consequently,  $(\rho_{\mathbf{t}}(n))_{n\geq 1}$ is a $2$-regular sequence.
\end{abstract}

\begin{keyword}
Thue-Morse like sequence\sep   Additive complexity\sep $k$-regular sequence
\MSC[2010]{05D99\sep 11B85}
\end{keyword}

\maketitle

\section{Introduction}
{Recently the study of the abelian complexity of infinite words was initiated by G. Richomme, K. Saari, and L. Q. Zamboni \cite{RSZ11}. For example, the abelian complexity functions of some notable sequences, such as the Thue-Morse sequence and all Sturmian sequences, were studied in \cite{RSZ11} and \cite{CH73} respectively. There are also many other works including the unbounded abelian complexity, see \cite{BBT11,LCWW17,LCW18,MR13,RSZ10} and references therein.  At the mean time, many authors had devoted to the generalizations of the abelian complexity. For instance, $l$-abelian complexity, cyclic complexity and binomial complexity are first presented in \cite{KSZ13}, \cite{CFSZ14} and \cite{RS15} respectively. In 1994, G. Pirillo and S. Varricchio \cite{PV94} raised the following question: do there exist infinite words avoiding additive squares or additive cubes? Based on this infamous problem, H. Ardal, T. Brown, V. Jungi\'{c} and J. Sahasrabudhe proposed the additive complexity for infinite word on a finite subset of $\mathbb{Z}$ in \cite{ABJS12}.  It follows from the definition of additive equivalence in Section $2$ that the additive complexity $\{\rho^{+}(n)\}$  coincides with the abelian complexity $\{\rho^{ab}(n)\}$ for every infinite word on the alphabet composed of two integers. For every infinite word on the alphabet composed of integers whose cardinality is not less than three, it is easy to know that $\rho^{+}(n) \leq \rho^{ab}(n)$ for every $n.$ }

Let $\sigma$ be the morphism $0\mapsto 01, 1\mapsto 12, 2 \mapsto 20$ on $\{0,1,2\}$ and $\mathbf{t}:=\sigma^{\infty}(0)$. The infinite sequence $\mathbf{t}$ is a Thue-Morse like sequence (see \cite{AS03,Sloane}). Further, $\mathbf{t}$ is $2$-automatic  and uniformly recurrent (see \cite{F02}). A sequence $\mathbf{w}=w_0w_1w_2\cdots$ is a \emph{$k$-automatic} sequence if its \emph{$k$-kernel}
$\{ (w_{{k^e}n + c})_{n \ge 0}~|~ {e \ge 0,0 \le c < k^e}\} $
finite. If the $\mathbb{Z}$-module generated by its $k$-kernel is
finitely generated, then $\mathbf{w}$ is a \emph{$k$-regular} sequence.


In this paper, we investigate the additive complexity function
$\rho^{+}_{\mathbf{t}}(n)$ of $\mathbf{t}$, where $\rho^{+}_{\mathbf{t}}(n)$ is the number of different digit sums of all words (of length $n$) that occur in $\mathbf{t}$. We give the explicit value of $(\rho^{+}_{\mathbf{t}}(n))_{n\geq 1}$. 
\begin{theorem}\label{thm:main}
For all integer $n\geq 1$, \[\rho_{\mathbf{t}}^{+}(n)  = 2\lfloor \log_2 n\rfloor+3.\] 
\end{theorem}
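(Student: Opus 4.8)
The plan is to translate the problem into elementary properties of the digit sum and to exploit the length-$2$ block structure of $\sigma$. First I would record that the defining relations of $\mathbf t$ are $t_{2i}=t_i$ and $t_{2i+1}\equiv t_i+1 \pmod 3$ (equivalently, $t_i$ is the number of $1$'s in the binary expansion of $i$, reduced modulo $3$). Writing $S(i,n)=\sum_{k=i}^{i+n-1}t_k$ and $A_n=\{S(i,n):i\ge 0\}$, the additive complexity is $\rho^{+}_{\mathbf t}(n)=|A_n|$. The whole argument then reduces to two assertions: (i) $A_n$ is a full interval of consecutive integers, and (ii) its endpoints are $\max A_n=n+\lfloor\log_2 n\rfloor+1$ and $\min A_n=n-\lfloor\log_2 n\rfloor-1$, so that $|A_n|=\max A_n-\min A_n+1=2\lfloor\log_2 n\rfloor+3$.

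The key tool is a set of four decomposition formulas. Set $\phi(x)=x+((x+1)\bmod 3)$, the digit sum of $\sigma(x)$ (so $\phi(0)=1,\phi(1)=3,\phi(2)=2$), and $G(j,m)=\sum_{l=j}^{j+m-1}\phi(t_l)=S(2j,2m)$. Using $S(i+1,n)=S(i,n)-t_i+t_{i+n}$ together with the relations above, I would derive, for all $j,m$,
\[ S(2j,2m)=G(j,m),\qquad S(2j+1,2m)=G(j,m)-t_j+t_{j+m}, \]
\[ S(2j,2m+1)=G(j,m)+t_{j+m},\qquad S(2j+1,2m+1)=G(j,m)-t_j+\phi(t_{j+m}). \]
These express every length-$n$ digit sum through a $\phi$-weighted sum over a window of length $m=\lfloor n/2\rfloor$, plus boundary corrections lying in $\{0,1,2\}$.

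Next I would prove the extremes by induction on $n$, through the steps $n\mapsto 2n$ and $n\mapsto 2n+1$. Writing $f(n)=\max A_n$, the target recursions are $f(2m)=f(m)+m+1$ and $f(2m+1)=f(m)+m+2$, together with the mirror recursions $\min A_{2m}=\min A_m+m-1$ and $\min A_{2m+1}=\min A_m+m$; these solve to the endpoints in (ii). For the upper bounds I would feed the four formulas into $\max_j$, relate $\max_j G(j,m)$ to $f(m)$ (using $G(j,m)=m+2\#\{1\text{'s in the window}\}+\#\{2\text{'s in the window}\}$ and comparing with $S$), and bound the boundary corrections; for the lower bounds I would exhibit explicit starting positions realizing the claimed maxima and minima, obtained from an extremal window at scale $m$ by choosing the correct parity of the start and the adjacent letters. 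The fullness in (i) I would also carry through the same induction: since $S(i+1,n)-S(i,n)=t_{i+n}-t_i\in\{-2,\dots,2\}$, the only way to skip a value is a jump of $\pm 2$, and the decomposition formulas let me realize any putatively skipped value by switching the parity of the start or adjusting a single boundary letter.

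The hard part is the extremal induction: the boundary terms $t_j,t_{j+m},\phi(t_{j+m})$ are \emph{not} independent of the interior sum $G(j,m)$, so one cannot optimize them separately. Controlling their joint behaviour requires the local combinatorics of $\mathbf t$ — in particular that $000$ and $222$ never occur (a two-line consequence of $t_{2i}=t_i$, $t_{2i+1}\equiv t_i+1$), and more generally a short classification of which length-$2$ and length-$3$ factors can sit next to a near-extremal window. Showing that an extremal window at scale $m$ always extends to a near-extremal window at scale $2m$ or $2m+1$ with the right boundary letters, while simultaneously no configuration beats the claimed bound, is the technical core; once (i) and (ii) are in hand, $\rho^{+}_{\mathbf t}(n)=|A_n|=2\lfloor\log_2 n\rfloor+3$ follows immediately.
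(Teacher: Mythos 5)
Your reduction is exactly the paper's strategy: the paper also proves the theorem in three steps, namely (a) the bounds $n-\lfloor\log_2 n\rfloor-1\le \mathrm{DS}(u)\le n+\lfloor\log_2 n\rfloor+1$ for every factor of length $n$ (Proposition \ref{prop:dsubounds}), (b) attainment of both bounds (Proposition \ref{lem:max_f}), and (c) an intermediate value property (Proposition \ref{prop:3}). Your four decomposition formulas are correct (they are the arithmetic form of the paper's factorizations $u=\sigma(w)$, $a\sigma(v)b$), and the recursions $f(2m)=f(m)+m+1$, $f(2m+1)=f(m)+m+2$ do solve to the right endpoints. But the proposal stops exactly where the content of the theorem lies, and you say so yourself (``the hard part'', ``the technical core''). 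Concretely, relating $\max_j G(j,m)$ to $f(m)$ is not a matter of ``comparing with $S$'': maximizing $G$ means maximizing the functional $2|w|_1+|w|_2$ over factors $w$ of length $m$, while $f(m)$ maximizes $|w|_1+2|w|_2$, and the only reason these maxima coincide (up to the additive constant $m$) is that the language of $\mathbf{t}$ is closed under the letter-exchange-plus-reversal maps $w\mapsto\tau_c(w)^R$ — this is the paper's Lemmas \ref{lem:sigmatau} and \ref{lem:fab}, proved by a separate induction, and nothing in your outline supplies it. Without that symmetry a one-functional induction does not close; the paper instead cycles through the three difference functionals $|u|_2-|u|_0$, $|u|_1-|u|_0$, $|u|_1-|u|_2$ (Lemma \ref{lem:inequality}) to get the upper bound, and for attainment it constructs explicit extremal words $W(n)$ out of the binary digits of $n$ (Lemma \ref{lem:max_factor_subset}, Proposition \ref{lem:max_f}); your ``exhibit explicit starting positions'' is not backed by any construction.

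The second gap is the interval property. Your starting observation, that consecutive window sums differ by at most $2$, is also where the paper starts (Lemma \ref{lem:mindiff}), but the claim that a skipped value ``can be realized by switching the parity of the start or adjusting a single boundary letter'' is precisely the statement that needs proof, and it is not a local two-line fact. In the paper it is Lemmas \ref{lem:tech} and \ref{lem:bigodd}, whose proof is an exhaustive finite verification over all $\sigma^6$-images of length-$3$ factors (windows of length $192$), together with a separate exhaustive check for all $n\le 128$ (Appendix B). The obstruction occurs at positions $j$ with $t_j=0$, $t_{j+n}=2$ and equal sums, and whether a $+1$ step is available nearby depends on a neighbourhood far larger than the ``length-2 and length-3 factors next to a near-extremal window'' you propose to classify; at minimum some such finite verification has to be carried out, and your proposal does not do it. So what you have is a correct plan with correct bookkeeping identities, but both pillars — the extremal recursion and the intermediate value property — remain unproved.
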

Consequently, we know that the additive complexity function $(\rho_{\mathbf{t}}(n))_{n\geq 1}$ satisfying the recurrence relations: $\rho_{\mathbf{t}}^{+}(1)=3$ and for all $n\geq 1$,
\[\rho_{\mathbf{t}}^{+}(2n) =  \rho_{\mathbf{t}}^{+}(2n+1) =  \rho_{\mathbf{t}}^{+}(n)+2.
\]
The above recurrence relations imply the regularity of $(\rho_{\mathbf{t}}(n))_{n\geq 1}$. 
\begin{corollary}
The additive complexity $(\rho_{\mathbf{t}}(n))_{n\geq 1}$ of $\mathbf{t}$ is a $2$-regular sequence. 
\end{corollary}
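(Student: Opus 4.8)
The plan is to deduce $2$-regularity directly from the recurrence relations $\rho^{+}_{\mathbf{t}}(2n)=\rho^{+}_{\mathbf{t}}(2n+1)=\rho^{+}_{\mathbf{t}}(n)+2$ (valid for $n\geq 1$) established above, taking Theorem~\ref{thm:main} as already proved. I shall lean on two structural facts. First, by the definition recalled in the introduction, a sequence is $2$-regular exactly when the $\mathbb{Z}$-module generated by its $2$-kernel is finitely generated. Second, the class of $2$-regular sequences is itself a $\mathbb{Z}$-module that contains every constant sequence and every finitely supported sequence. Since the value attached to the index $n=0$ only alters finitely many entries and so cannot affect regularity, I first extend $\rho^{+}_{\mathbf{t}}$ to $n=0$ by fixing any value, say $\rho^{+}_{\mathbf{t}}(0)=3$, and write $\rho:=\rho^{+}_{\mathbf{t}}$ for brevity.

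The key step is to describe every element of the $2$-kernel in closed form. Iterating the recurrence (equivalently, using $\lfloor\log_2(2^e n+c)\rfloor=e+\lfloor\log_2 n\rfloor$ for $n\geq 1$ and $0\leq c<2^e$, so that shifting by $c$ within a block of length $2^e$ never changes the value) yields the clean identity $\rho(2^e n+c)=\rho(n)+2e$ for all $e\geq 0$, $0\leq c<2^e$ and every $n\geq 1$. Hence each kernel sequence agrees with $\rho+2e\,\mathbf{1}$ at every positive index, where $\mathbf{1}$ denotes the constant sequence $1$; the two can disagree only at $n=0$, where the left-hand side equals $\rho(c)$.

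I then absorb this single exceptional entry using the indicator sequence $\mathbf{e}_0=(1,0,0,\dots)$, which is finitely supported and therefore $2$-regular. A direct check of the two cases $n=0$ and $n\geq 1$ gives
\[
\bigl(\rho(2^e n+c)\bigr)_{n\geq 0}=\rho+2e\,\mathbf{1}+\bigl(\rho(c)-\rho(0)-2e\bigr)\,\mathbf{e}_0,
\]
so every member of the $2$-kernel lies in the $\mathbb{Z}$-module $M$ spanned by the three sequences $\rho$, $\mathbf{1}$ and $\mathbf{e}_0$. The $\mathbb{Z}$-module generated by the $2$-kernel is thus a submodule of the finitely generated module $M$; because $\mathbb{Z}$ is Noetherian, this submodule is itself finitely generated, which is precisely the assertion that $\rho$ is $2$-regular.

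The argument is short once the closed description of the kernel is available, so the only point demanding care is the bookkeeping at $n=0$: the recurrences hold only for $n\geq 1$, and I must verify that the discrepancy between $\rho(2^e n+c)$ and $\rho(n)+2e$ is genuinely confined to the single index $n=0$ and is captured exactly by the coefficient $\rho(c)-\rho(0)-2e$ of $\mathbf{e}_0$. This is the main (and minor) obstacle; everything else follows from the module closure properties of $2$-regular sequences.
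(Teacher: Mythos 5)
Your proposal is correct and follows the same route the paper intends: the paper simply asserts that the recurrences $\rho^{+}_{\mathbf{t}}(2n)=\rho^{+}_{\mathbf{t}}(2n+1)=\rho^{+}_{\mathbf{t}}(n)+2$ imply $2$-regularity, and you supply the standard verification by computing the $2$-kernel in closed form, $\rho(2^e n+c)=\rho(n)+2e$ for $n\geq 1$, and showing it lies in the $\mathbb{Z}$-module spanned by $\rho$, $\mathbf{1}$ and $\mathbf{e}_0$. Your careful bookkeeping at $n=0$ (via the extension $\rho(0)=3$ and the correction term $\bigl(\rho(c)-\rho(0)-2e\bigr)\mathbf{e}_0$) is sound and merely makes explicit what the paper leaves implicit.
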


From the above corollary, it is natural to have the following conjecture. 
\begin{conjecture}
The additive complexity of any $k$-automatic sequence is a $k$-regular sequence.
\end{conjecture}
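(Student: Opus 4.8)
The plan is to treat a general $k$-automatic sequence $\mathbf{w}=w_0w_1w_2\cdots$ over a finite integer alphabet $A\subseteq\mathbb{Z}$ exactly as $\mathbf{t}$ is treated here, by reducing the additive complexity to the arithmetic of prefix sums. Put $P(m)=\sum_{j=0}^{m-1}w_j$ with $P(0)=0$. Since $\mathbf{w}$ is $k$-automatic, the integer sequence $(w_j)_{j\ge 0}$ is $k$-regular, and the prefix sums of a $k$-regular sequence are again $k$-regular, so $(P(m))_{m\ge 0}$ is $k$-regular. The digit sum of the factor of length $n$ that starts at position $i$ equals $P(i+n)-P(i)$, so
\[
\rho^{+}_{\mathbf{w}}(n)=\bigl|\{\,P(i+n)-P(i):\ i\ge 0\,\}\bigr|.
\]
The entire problem is thus to show that the number of distinct values in this family of differences is a $k$-regular function of $n$.

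First I would isolate the two extremal functions $M(n)=\max_{i}\bigl(P(i+n)-P(i)\bigr)$ and $m(n)=\min_{i}\bigl(P(i+n)-P(i)\bigr)$, which in Theorem~\ref{thm:main} already carry all the information. The aim here is to prove that $M$ and $m$ are $k$-regular, ideally by showing that they are $k$-synchronized: one tries to recognize, by a finite automaton reading the base-$k$ expansions of $i$, $n$ and $s$ in parallel, the relation ``$s=P(i+n)-P(i)$''. Granting synchronization of this prefix-sum relation, the predicates ``$s=M(n)$'' and ``$s=m(n)$'' become first-order definable in $\langle\mathbb{N},+,V_k\rangle$ by adding one existential and one universal quantifier, and the B\"uchi--Bruy\`ere recognizability theorem then yields that $M$ and $m$ are $k$-synchronized, hence $k$-regular.

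To capture the full count, and not only its two extremes, I would adapt the transducer/matrix method that Parreau, Rigo, Rowland and Vandomme developed to prove the $k$-regularity of the $l$-abelian complexity of $k$-automatic sequences. For each $n$ record the \emph{digit-sum spectrum} $D(n)=\{P(i+n)-P(i):i\ge 0\}$, normalized by subtracting $m(n)$, so that $D(n)$ becomes a finite subset of $\{0,1,\dots,M(n)-m(n)\}$ containing $0$. Because the defining morphism sends factors of length $n$ to factors of length $\approx kn$ up to boundary words of bounded length, the spectrum $D(kn+r)$ is obtained from $D(n)$ by a bounded, position-independent refinement rule. The goal is then to prove that, across all $n$, the normalized spectra realize only finitely many shapes and that the base-$k$ passages $n\mapsto kn+r$ act on this finite catalogue through a fixed finite family of transformations; this would exhibit $(\rho^{+}_{\mathbf{w}}(n))_{n\ge 1}$ as one coordinate of a vector-valued sequence obeying a linear matrix recurrence under the digit operations, i.e. with a finitely generated $k$-kernel.

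The main obstacle is the control of \emph{gaps}. For $\mathbf{t}$ the spectrum $D(n)$ is always a full interval of integers, so $\rho^{+}_{\mathbf{t}}(n)=M(n)-m(n)+1$ and regularity reduces at once to that of the extremal functions. In general $D(n)$ may contain gaps whose pattern varies with $n$, and the crux is to bound, uniformly in $n$, the combinatorial complexity of these gap patterns and to show that the $n\mapsto kn+r$ step preserves the finite catalogue of shapes. This is precisely the point at which the statement could fail for an unfavourable morphism and where a proof must do genuine work beyond the extremal analysis. Since $\rho^{+}_{\mathbf{w}}(n)\le\rho^{ab}_{\mathbf{w}}(n)$ when $|A|\ge 3$, the additive complexity is a scalar, and hence more tractable, shadow of the abelian complexity, which makes it the natural testing ground for this finiteness-of-spectra phenomenon and for the conjecture as a whole.
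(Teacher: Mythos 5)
You are attempting to prove what the paper explicitly leaves open: this statement is Conjecture~1, the paper offers no proof of it, and your text is, by your own admission (``the goal is then to prove\dots'', ``this is precisely the point at which the statement could fail''), a research programme rather than a proof. Beyond that, one of your two concrete steps provably fails, and it fails already for the sequence $\mathbf{t}$ of this paper. If the relation $s=P(i+n)-P(i)$ were $k$-synchronized, then fixing $i=0$ would make the graph of $P$ synchronized, and since synchronized relations are closed under first-order combinations with addition, the functions $M(n)$ and $M(n)-n$ would be $k$-synchronized as well; the same conclusion follows directly from your weaker goal that the predicate ``$s=M(n)$'' be synchronized. But by Theorem~\ref{thm:main} and Proposition~\ref{lem:max_f}, for $\mathbf{t}$ one has $M(n)=n+\lfloor\log_2 n\rfloor+1$, so $M(n)-n=\lfloor\log_2 n\rfloor+1$ is unbounded and $o(n)$, contradicting the growth dichotomy for $k$-synchronized functions (an unbounded synchronized function cannot be $o(n)$). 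For the $\pm1$ Rudin--Shapiro sequence the obstruction is even larger, with fluctuations of order $\sqrt{n}$. So the B\"uchi--Bruy\`ere route is not merely unproven here, it is unavailable: the quantities in play are $k$-regular precisely \emph{without} being $k$-synchronized, and conflating the two classes is the central error in the first half of your plan.

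Your fallback, the transducer/matrix method in the style of Parreau--Rigo--Rowland--Vandomme, is the plausible route, but the two claims that carry all the weight --- that $D(kn+r)$ is obtained from $D(n)$ by a ``bounded, position-independent refinement rule'', and that the normalized spectra realize ``only finitely many shapes'' --- are exactly the open content of the conjecture, and no argument is given for either. As stated, the second claim cannot even be literally true whenever $M(n)-m(n)\to\infty$, which is the case for $\mathbf{t}$ itself (diameter $2\lfloor\log_2 n\rfloor+2$): the normalized spectrum is a subset of an interval of unbounded length, so no fixed finite catalogue of subsets suffices, and one would have to re-encode spectra as, say, a full interval plus boundary data of bounded size --- specifying such an encoding and proving it is preserved under $n\mapsto kn+r$ is the genuine work. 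Note that the paper only reaches regularity for $\mathbf{t}$ by proving the spectrum is gap-free (Propositions~\ref{prop:dsubounds}, \ref{lem:max_f} and \ref{prop:3}, the last resting on the uniform recurrence of $\mathbf{t}$ via Lemmas~\ref{lem:mindiff}--\ref{lem:bigodd} plus finite computer verification), whence $\rho^{+}_{\mathbf{t}}(2n)=\rho^{+}_{\mathbf{t}}(2n+1)=\rho^{+}_{\mathbf{t}}(n)+2$; nothing in your sketch substitutes for that gap analysis in the general automatic case. Your reduction of $\rho^{+}_{\mathbf{w}}(n)$ to window sums of the ($k$-regular) prefix-sum sequence is correct and is a sensible framing, but counting distinct values of a two-parameter regular family is not a closure property of $k$-regular sequences, so the conjecture remains untouched.
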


This paper is organized as follows. In Section 2, we give some notations. In Section 3, we prove Theorem \ref{thm:main}. The proof is separated into $3$ steps. Each step gives a more specific result. 

\section{Preliminaries}
An \emph{alphabet} $\mathcal{A}$ is a finite and non-empty set (of symbols) whose elements are called \emph{letters}. A (finite)
\emph{word} over the  alphabet $\mathcal{A}$ is a concatenation of letters in $\mathcal{A}$. The concatenation of two words ${u} =
u_{0}u_{1} \cdots u(m)$ and ${v} = v_{0}v_{1} \cdots v_{n}$ is the word ${uv} = u_{0}u_{1} \cdots u_{m}v_{0}v_{1} \cdots v_{n}$. The set of all finite
words over $\mathcal{A}$ including the \emph{empty word} $\varepsilon $ is denoted by $\mathcal{A}^*$. An infinite word $\mathbf{w}$ is an
infinite sequence of letters in $\mathcal{A}$. The set of all infinite words over $\mathcal{A}$ is denoted by $\mathcal{A}^{\mathbb{N}}$.

The \emph{length} of a finite word ${w}\in \mathcal{A^*}$, denoted by $|w|$, is the number of letters contained in $w$. We set $\left|
\varepsilon  \right| = 0$. For any word $u\in\mathcal{A}^{*}$ and any letter $a \in \mathcal{A}$, let $|{u}|_a$ denote the number of occurrences
of $a$ in ${u}$.

A word $w$ is a factor of a finite (or an infinite) word $v$, written by $w\prec v$ if there exist a finite word $x$ and a finite (or an
infinite) word $y$ such that $v=xwy$. When $x=\varepsilon$,  ${w}$ is called a \emph{prefix} of ${v}$, denoted by ${w} \triangleleft {v}$; when
$y=\varepsilon$, $ w$ is called a suffix of ${v}$, denoted by ${w} \triangleright {v}$.

For a real number $x$, let $\lfloor x \rfloor$ (resp. $\lceil x \rceil$)  be the integer that is less (resp. larger) than  or equal to  $x.$  For every natural number $n$ and some positive integer $b\geq 2,$ set $(n)_b$ be the regular $b$-ary expansion of $n$.

\subsection{Additive complexity}
Now we assume that $\mathcal{A}\subset\mathbb{Z}$. Let \[\mathbf{w}=w_{0}w_{1}w_{2}\cdots \in \mathcal{A}^{\mathbb{N}}\] be an
infinite word. Denote by ${\mathcal{F}_{\mathbf{w}}(n)}$ the set of all
factors of $\mathbf{w}$ of length $n$, i.e., \[{\mathcal{F}_{\mathbf{w}}}(n): = \{w_{i}w_{i + 1}\cdots w_{i + n - 1} : i \geq 0 \}.\]
Write $\mathcal{F}_{\mathbf{w}}=\cup_{n\geq 1}\mathcal{F}_{\mathbf{w}}(n)$.
The \emph{subword complexity function} ${\rho_{\mathbf{w}}}:~\mathbb{N} \to \mathbb{N}$ of $\mathbf{w}$ is defined by
\[\rho_{\mathbf{w}}(n) := \# \mathcal{F}_{\mathbf{w}}(n).\]
Denote the \emph{digit sum} of ${u}=u_0\cdots u_{|{u}|-1}\in\mathcal{A}^{*}$ by
\[\mathrm{DS}(u):= \sum_{j=0}^{|{u}|-1} u_{j}.\]
Two finite words $u$, $v\in\mathcal{A}^*$ is \emph{additive equivalent} if $\text{DS}(u)=\text{DS}(v)$. The additive equivalent induces an equivalent relation, denoted by  $\sim_{+}$. 
\begin{definition}
The \emph{additive subword complexity function} ${\rho_{\mathbf{w}}}^{+}:~\mathbb{N} \to \mathbb{N}$ of $\mathbf{w}$ is defined by
\[\rho_{\mathbf{w}}^{+}(n) := \# \{\mathcal{F}_{\mathbf{w}}(n)/\sim_{+}\}.\]
\end{definition}
In fact, 
\begin{equation}\label{eq:rhos}
\rho_{\mathbf{w}}^{+}(n) = \#\{\text{DS}(u): u \in  \mathcal{F}_{\mathbf{w}}(n)\}.
\end{equation}

The additive complexity can also be obtained from the Parikh vector. Let $\mathbf{u}$ be an infinite word on an alphabet $\mathcal{A}= \{a_0, a_1, \cdots, a_{q-1}\}$ and $v, $ a
factor of $\mathbf{u}$. The Parikh vector of $v$ is the $q$-uplet \[\psi(w) = (|v|_{a_0} , |v|_{a_1} , \cdots, |v|_{a_{q-1}} ).\] Denote by $\Psi_{\mathbf{u}}(n)$, the set of the Parikh vectors of the factors of length $n$ of ${\mathbf{u}}$:
\[ \Psi_{\mathbf{u}}(n) = \{\psi(v) : v \in \mathcal{F}_{\mathbf{u}}(n)\}. \]

The abelian complexity of ${\mathbf{u}}$ is defined by: $\rho^{ab}_{\mathbf{u}}(n) = \#\Psi_{\mathbf{u}}(n)$. Given any infinite word $\mathbf{w}$ on an alphabet $\mathcal{B}= \{b_0, b_1, \cdots, b_{q-1}\} \in \mathbb{Z}^q$ with $b_0<b_1<\cdots <b_{q-1}$.  It is easy to verify that the additive complexity of $\mathbf{w}$ is 
\begin{equation}\label{eq:rhos2} \rho_{\mathbf{w}}^{+}(n) = \#\left\{<(b_0,b_1, \cdots, b_{q-1}),  \psi(v)> : v \in \mathcal{F}_{\mathbf{w}}(n)\right\}
\end{equation}
where $<\cdot,\cdot>$ is the usual inner product in the Euclid space $\mathbb{R}^{q}.$ In fact, \[\text{DS}(u) = <(b_1, \cdots, b_{q-1}),  \psi(u)>\] for every factor $u \in \mathcal{F}_{\mathbf{w}}(n)$.

\section{Additive complexity of $\mathbf{t}$}
In this section, we prove Theorem \ref{thm:main}. According to \eqref{eq:rhos}, the study of the additive complexity function turns out to be the study of digit sums of all factors. Our strategy in the proof of Theorem \ref{thm:main} is the following:
\begin{itemize}
\item (Proposition \ref{prop:dsubounds})
give the upper and lower bounds of $\rho^{+}_{\mathbf{t}}(n)$ for all $n\geq 1$;
\item (Proposition \ref{lem:max_f})
show that the upper and lower bounds can be attained;
\item (Proposition \ref{prop:3})
study the all the accessible values of digit sums.  
\end{itemize}
Then, Theorem \ref{thm:main} follows from Proposition \ref{prop:dsubounds}, \ref{lem:max_f} and \ref{prop:3}.
\subsection{Upper and lower bounds of digit sums of factors}
\begin{proposition}\label{prop:dsubounds}
For every integer $n\geq 1$, 
\[n-\lfloor \log_2 n\rfloor-1\leq \mathrm{DS}(u)\leq n+\lfloor \log_2 n\rfloor+1\]
for all $u\in\mathcal{F}_{\mathbf{t}}(n)$.
\end{proposition}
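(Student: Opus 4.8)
The plan is to translate the statement into a bound on the Parikh imbalance of factors and then run an induction based on desubstitution. Writing $\mathrm{DS}(u) = |u|_1 + 2|u|_2$ and $n = |u|_0 + |u|_1 + |u|_2$, one has $\mathrm{DS}(u) - n = |u|_2 - |u|_0$, so the proposition is equivalent to the single estimate $\bigl||u|_2 - |u|_0\bigr| \le \lfloor \log_2 n\rfloor + 1$ for every $u \in \mathcal{F}_{\mathbf{t}}(n)$. I therefore set $x(u) := |u|_2 - |u|_0$ and $y(u) := |u|_1 - |u|_0$; both are additive under concatenation, and the goal becomes bounding $|x(u)|$.

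Two structural facts drive the argument. First, applying $\sigma$ transforms the Parikh vector linearly, and a direct computation gives $(x,y)(\sigma(v)) = A\,(x,y)(v)$ with $A = \begin{pmatrix} 0 & 1 \\ -1 & 1 \end{pmatrix}$; since $A^3 = -I$, the map $A$ has order $6$ and preserves the quadratic form $Q(x,y) = x^2 - xy + y^2$. Second, $\sigma$ commutes with the cyclic relabelling $0\mapsto 1\mapsto 2\mapsto 0$ and is primitive, so its subshift is minimal and $\mathcal{F}_{\mathbf{t}}$ is closed under this relabelling; tracking the induced action on $(x,y)$ shows that $\{-x(u)\} = \{y(u)\}$ as $u$ ranges over $\mathcal{F}_{\mathbf{t}}(n)$, whence $M_n := \max_{|u|=n}|x(u)| = \max_{|u|=n}|y(u)|$. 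The base case $M_1 = 1$ is immediate.

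The inductive step desubstitutes a factor $u$ of length $n \ge 2$ occurring at position $i$. Grouping the positions into the $\sigma$-pairs $\{2m,2m+1\}$ writes $u = b_L\,\sigma(u')\,b_R$, where $b_L$ is present iff $i$ is odd and $b_R$ iff $i+n$ is odd, and where each present boundary letter contributes one of the six vectors obtained as the images of single letters (these are exactly the lattice points on $Q = 1$, and $A$ permutes them cyclically). Using $(x,y)(u) = (x,y)(b_L) + A\,(x,y)(u') + (x,y)(b_R)$ together with the $A$-invariance of $Q$, I aim to prove the recursion $M_n \le M_{\lceil n/2\rceil} + 1$, which by induction yields $M_n \le \lfloor \log_2 n\rfloor + 1$ and hence the proposition. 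When $n$ is odd exactly one boundary letter is present and $|u'| = \lfloor n/2\rfloor$, so the single boundary vector contributes one unit and the recursion closes cleanly; when $n$ is even and $i$ is even there is no boundary at all and $(x,y)(u) = A\,(x,y)(u')$ with $|u'| = n/2$, which is even better.

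The main obstacle is the remaining case: $n$ even and $i$ odd, where both boundary letters appear and $|u'| = n/2 - 1$. Here the naive estimate adds two boundary units to $A\,(x,y)(u')$, giving $M_n \le M_{n/2-1} + 2$, which overshoots the target $M_{n/2} + 1$ precisely when $n/2$ is not a power of $2$ (so that $M_{n/2} = M_{n/2 - 1}$). The delicate point is thus to show that the two forced boundary vectors $(x,y)(b_L)$ and $(x,y)(b_R)$ can contribute only one unit of growth to the rotated imbalance of $u'$: when $(x,y)(u')$ is near-extremal for $Q$, the boundary letters — which are determined by the letters of $\mathbf{t}$ adjacent to $u'$ — cannot both point outward. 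I expect to resolve this by strengthening the induction hypothesis so that it records which near-extremal imbalances are realizable together with the admissible boundary letters at each end, equivalently by showing that the achievable set $\{(x,y)(u) : |u| = n\}$ is contained in the $A$-invariant hexagon $\{Q \le (\lfloor\log_2 n\rfloor+1)^2\}$ and that one desubstitution step enlarges this hexagon by a single ring even in the two-boundary case. The lower bound on $\mathrm{DS}(u)$ then follows from the same estimate applied to $-x$, or directly from the cyclic relabelling symmetry.
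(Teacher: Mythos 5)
Your setup (reduction to $\bigl||u|_2-|u|_0\bigr|\le\lfloor\log_2 n\rfloor+1$, the order-$6$ matrix $A$, induction by desubstitution with cost $1$ per halving) is exactly the skeleton of the paper's proof of Proposition~\ref{prop:dsubounds}; the paper just tracks the three rotating differences $|u|_2-|u|_0$, $|u|_1-|u|_0$, $|u|_1-|u|_2$ by hand (Lemmas~\ref{lem:image_num} and~\ref{lem:inequality}) instead of packaging them into $A$. But your argument is not a proof: the case you yourself single out as ``the main obstacle'' --- $n$ even and the occurrence starting at an odd position, so $u=a\sigma(v)b$ with $|v|=n/2-1$ and two boundary letters --- is precisely where the content of the proposition lies, and you leave it open. ``I expect to resolve this by strengthening the induction hypothesis'' is a plan, not an argument; with only the naive estimate you get $M_n\le M_{n/2-1}+2$, which does not iterate to $\lfloor\log_2 n\rfloor+1$. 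Moreover, the hexagon-ring idea as sketched needs information your induction does not carry: what must be ruled out is that a factor $v$ with near-extremal imbalance occurs in $\mathbf{t}$ flanked by two letters that both push outward, and that is a statement about which letters can co-occur next to which factors, not about the shape of the achievable set of Parikh imbalances.

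The paper closes exactly this case with one additional, purely local, trick that your proposal is missing. Since $a$ is the second letter of a $\sigma$-block and $b$ is the first letter of one, the occurrence of $u=a\sigma(v)b$ extends inside $\mathbf{t}$ to $\underline{a}u\overline{b}=\sigma(\underline{a}vb)$ with $\underline{a}vb\in\mathcal{F}_{\mathbf{t}}(n/2+1)$. Applying Lemma~\ref{lem:image_num} to this identity and then splitting one letter back off yields, for each of the nine values of $ab$, a comparison factor of length exactly $n/2$ --- namely $x=\underline{a}v$ when $ab\notin\{00,12\}$ and $x=vb$ when $ab\in\{00,12\}$ --- such that $|u|_2-|u|_0$ differs from $|x|_1-|x|_0$ by at most $1$. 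In other words, one boundary letter is absorbed into the half-length comparison word rather than estimated separately, no co-occurrence information is ever needed, and every desubstitution step descends to length exactly $\lfloor n/2\rfloor$ at cost $1$. Two smaller points: the recursion must be stated with $\lfloor n/2\rfloor$ rather than $\lceil n/2\rceil$, since $M_n\le M_{\lceil n/2\rceil}+1$ iterates from $M_1=1$ only to $\lceil\log_2 n\rceil+1$, which equals $\lfloor\log_2 n\rfloor+2$ when $n$ is not a power of $2$; and the images of single letters under the Parikh map give only three of the six lattice points on $Q=1$, not all six. Neither of these is fatal, but the two-boundary case is: without the absorption step (or a genuine substitute for it) the proof does not go through.
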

Note that for every $u=u_0u_1\cdots u_{n-1} \in \mathcal{F}_{\mathbf{t}}(n)$, 
\begin{align}
\mathrm{DS}(u) & = \sum_{i=0}^{n-1}u_i=0\cdot |u|_0+1\cdot |u|_1+2\cdot |u|_2 =|u|_0+|u|_1+2|u|_2-|u|_0\notag\\
& = n+|u|_2-|u|_0.\label{eq:dsn20}
\end{align}
To prove Proposition \ref{prop:dsubounds}, we only need to show that for all $u\in\mathcal{F}_{\mathbf{t}}(n)$, 
\begin{equation}\label{eq:ulbounds}
-\lfloor \log_2 n\rfloor-1\leq |u|_2-|u|_0\leq \lfloor \log_2 n\rfloor+1.
\end{equation}

The following lemmas are aimed to analysis the quantity $|u|_2-|u|_0$.
\begin{lemma}\label{lem:image_num}
For every $u \in \{0,1,2\}^{*}$,  
\begin{eqnarray*}
|\sigma(u)|_2-|\sigma(u)|_0 &=& |u|_1-|u|_0, \\
|\sigma(u)|_1-|\sigma(u)|_0 &=& |u|_1-|u|_2.
\end{eqnarray*}
\end{lemma}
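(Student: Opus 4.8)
The plan is to prove the two identities by a direct letter count, exploiting that $\sigma$ is a morphism. For any word $u=u_0u_1\cdots u_{m-1}$ we have $\sigma(u)=\sigma(u_0)\sigma(u_1)\cdots\sigma(u_{m-1})$, so the number of occurrences of a fixed letter in $\sigma(u)$ is the sum of its occurrences in the individual images $\sigma(u_i)$. Hence it suffices to record, for each of the three letters $a\in\{0,1,2\}$, how many $0$'s, $1$'s and $2$'s appear in $\sigma(a)$, and then add these up weighted by $|u|_a$.

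First I would read off the letter contents of the three images $\sigma(0)=01$, $\sigma(1)=12$, $\sigma(2)=20$. Tallying the occurrences of each symbol across these images yields
\[
|\sigma(u)|_0=|u|_0+|u|_2,\qquad |\sigma(u)|_1=|u|_0+|u|_1,\qquad |\sigma(u)|_2=|u|_1+|u|_2.
\]
Equivalently, the column vector $(|\sigma(u)|_0,|\sigma(u)|_1,|\sigma(u)|_2)^{\top}$ equals the incidence matrix of $\sigma$ applied to $(|u|_0,|u|_1,|u|_2)^{\top}$, so the whole computation is a single matrix–vector product.

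Then I would subtract the relevant pairs and let the common terms cancel. For the first identity, $|\sigma(u)|_2-|\sigma(u)|_0=(|u|_1+|u|_2)-(|u|_0+|u|_2)=|u|_1-|u|_0$, the $|u|_2$ cancelling. For the second, $|\sigma(u)|_1-|\sigma(u)|_0=(|u|_0+|u|_1)-(|u|_0+|u|_2)=|u|_1-|u|_2$, the $|u|_0$ cancelling. These are exactly the two claimed equalities.

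There is essentially no obstacle here: the statement is a pure bookkeeping identity about the incidence matrix of $\sigma$, and the only point worth emphasising is that each right-hand side depends on just two of the three letter counts precisely because of the cancellation above ($|u|_2$ in the first identity, $|u|_0$ in the second). This cancellation is what makes the lemma useful for later tracking of the quantity $|u|_2-|u|_0$ in \eqref{eq:ulbounds}.
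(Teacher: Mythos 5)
Your proposal is correct and is essentially identical to the paper's proof: both derive the three counting identities $|\sigma(u)|_0=|u|_0+|u|_2$, $|\sigma(u)|_1=|u|_0+|u|_1$, $|\sigma(u)|_2=|u|_1+|u|_2$ directly from the definition of $\sigma$ and then subtract to get the two claimed equalities. The incidence-matrix remark is a nice way to phrase the bookkeeping but adds nothing beyond what the paper does.
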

\begin{proof}
It follows from the definition of $\sigma$ that 
\[|\sigma(u)|_0 = |u|_0+|u|_2,\quad |\sigma(u)|_1 = |u|_0+|u|_1,\quad |\sigma(u)|_2 = |u|_1+|u|_2.\]
The above equations give the required results.
\end{proof}

Let $a,b,c$ be any arrangement of $0,1,2.$ Define $\tau_c : a\mapsto b, b \mapsto a, c \mapsto c$. For every finite word $w=w_0w_1\cdots w_{n-1}w_n \in\{0,1,2\}^*$, let $w^R=w_{n}w_{n-1}\cdots w_{1}w_{0}$ be the mirror of $w$. For every $x\in\{0,1,2\}$, write $\underline{x}:=x-1\, (\mathrm{mod}~3)$ and $\overline{x}:=x+1\, (\mathrm{mod}~3)$. The morphisms $\sigma$ and $\tau$ have the following commutative property.

\begin{lemma}\label{lem:sigmatau}
For every $u\in\mathcal{F}_{\mathbf{t}}$ and every $c=0,1,2$,
\begin{equation}\label{eqn:factor2}
\sigma( {\tau_{c}(u)}^R) =  {\tau_{\underline{c}}(\sigma(u))}^R. 
\end{equation}
\end{lemma}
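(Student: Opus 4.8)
The plan is to reduce the word identity \eqref{eqn:factor2} to a single-letter identity. Observe first that the claimed equality in fact holds for \emph{every} word $u \in \{0,1,2\}^{*}$, not only for factors of $\mathbf{t}$, since both sides are defined purely combinatorially; I would prove it at this level of generality. The key structural remark is that $\sigma$ and each $\tau_c$ are morphisms, whereas the mirror operation $R$ is an antimorphism. Writing $u = u_0 u_1 \cdots u_m$ as a concatenation of letters, I would expand the left-hand side as
\[
\sigma(\tau_c(u)^R) = \sigma\bigl(\tau_c(u_m)\cdots \tau_c(u_0)\bigr) = \sigma(\tau_c(u_m))\,\sigma(\tau_c(u_{m-1}))\cdots \sigma(\tau_c(u_0)),
\]
using that the mirror of a single letter is itself.

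For the right-hand side I would apply $\tau_{\underline c}$ letterwise to $\sigma(u) = \sigma(u_0)\cdots\sigma(u_m)$, obtaining a concatenation of the length-two blocks $B_i := \tau_{\underline c}(\sigma(u_i))$, and then reverse. Since $R$ reverses both the order of the blocks and the contents within each block,
\[
\tau_{\underline c}(\sigma(u))^R = B_m^R B_{m-1}^R\cdots B_0^R = \tau_{\underline c}(\sigma(u_m))^R \cdots \tau_{\underline c}(\sigma(u_0))^R.
\]
Comparing the two expansions block by block, the identity \eqref{eqn:factor2} is seen to be equivalent to the single-letter identity
\[
\sigma(\tau_c(x)) = \tau_{\underline c}(\sigma(x))^R \qquad (x,c \in \{0,1,2\}).
\]

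I would then verify this single-letter identity. The cleanest route is to encode the maps arithmetically modulo $3$: one checks directly that $\sigma(x) = x\,\overline{x}$ and that $\tau_c$ acts by $x \mapsto 2c - x \pmod 3$ (it fixes $c$ and interchanges $\overline c, \underline c$). With these formulas the left-hand side is $\sigma(2c-x) = (2c-x)(2c-x+1)$, while the right-hand side is the reversal of $\tau_{\underline c}(x\,\overline x) = (2(c-1)-x)(2(c-1)-x-1)$, which reduces to $(2c-x)(2c-x+1)$ upon using $-3\equiv 0$ and $-2 \equiv 1 \pmod 3$; the two agree. A plain $3\times 3$ case check over all $x,c$ is an equally valid alternative. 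Since the single-letter identity holds for each of the nine choices, the block-by-block comparison above yields \eqref{eqn:factor2}.

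The only delicate point is the reversal bookkeeping in the second step: one must confirm that after mirroring, the blocks on both sides are indexed in the \emph{same} order (from $u_m$ down to $u_0$), so that a blockwise comparison is legitimate. This index alignment is precisely what makes the length-doubling morphism $\sigma$ compatible with the antimorphism $R$ through the shift $c \mapsto \underline c$, and it is the feature to state carefully so that the reduction is rigorous rather than merely suggestive.
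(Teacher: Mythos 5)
Your proof is correct, and it takes a somewhat different route from the paper's. The paper proves \eqref{eqn:factor2} by induction on $|u|$ over factors of $\mathbf{t}$: writing $u=va$ with $a$ a letter, it combines the morphism property of $\sigma$ and $\tau_c$ with the antimorphism property of $R$ and applies the inductive hypothesis to $v$ and to $a$, the base case being exactly your single-letter identity on $\mathcal{F}_{\mathbf{t}}(1)=\{0,1,2\}$. Your argument unrolls that induction into a one-shot blockwise expansion, which buys two things: first, it makes explicit that the identity holds for \emph{every} word in $\{0,1,2\}^*$, so the restriction to $\mathcal{F}_{\mathbf{t}}$ in the statement is immaterial (the paper's induction, as written, needs the closure property that prefixes of factors are factors, which your version never invokes); second, your arithmetic encoding $\sigma(x)=x\overline{x}$ and $\tau_c(x)=2c-x \pmod 3$ collapses the nine base cases into a single computation, and that computation checks out, since the reversal of $\tau_{\underline{c}}(x\overline{x})=(2c-2-x)(2c-3-x)$ is $(2c-3-x)(2c-2-x)\equiv(2c-x)(2c-x+1)=\sigma(\tau_c(x))$. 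The mathematical content is the same in both proofs (morphism/antimorphism structure plus the single-letter identity), so the difference is one of packaging: yours is more general and non-inductive, the paper's is more compact. Incidentally, your computation also settles a typo in the paper's own induction step, which writes $\tau_{\bar{c}}$ where the statement (and the correct index, as your single-letter check shows) has $\tau_{\underline{c}}$.
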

\begin{proof}
It is easy to check \eqref{eqn:factor2} for all $u\in\mathcal{F}_{\mathbf{t}}(1)=\{0,1,2\}$. 
Assume that \eqref{eqn:factor2} holds for all $u\in\cup_{i=1}^{n-1}\mathcal{F}_{\mathbf{t}}(i)$. For any $u\in\mathcal{F}_{\mathbf{t}}(n)$, we have $u=va$ where $v\in\mathcal{F}_{\mathbf{t}}(n-1)$ and $a\in\{0,1,2\}$. Then 
\begin{align*}
 \sigma( {\tau_{c}(u)}^R) &= \sigma( {\tau_{c}(va)}^R) =  \sigma( \tau_{c}(a)^R\tau_c(v)^R)\notag\\
				      &= \sigma( {\tau_{c}(a)}^R) \sigma({\tau_{c}(v)}^R) =  {\tau_{\bar{c}}(\sigma(a))}^R {\tau_{\bar{c}}(\sigma(v))}^R ~~~~~~{\text{(by the assumption)}} \\
				      &= {\tau_{\bar{c}}(\sigma(va))}^R = {\tau_{\bar{c}}(\sigma(u))}^R,\notag
\end{align*}
which implies that \eqref{eqn:factor2} holds for all $u\in\mathcal{F}_{\mathbf{t}}(n)$ and $c=0,1,2$.
\end{proof}

While $\sigma$ maps every factor of $\mathbf{t}$ to a factor of $\mathbf{t}$, the morphism $\tau_c$ maps every factor of $\mathbf{t}$ to the mirror of some factor of $\mathbf{t}$.
\begin{lemma}\label{lem:fab}
If $u\in\mathcal{F}_{\mathbf{t}}$, then $\tau_c(u)^R\in\mathcal{F}_{\mathbf{t}}$ for $c=0,1,2$.
\end{lemma}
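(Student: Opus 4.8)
The plan is to prove the lemma by strong induction on the length $|u|$, using the commutation identity of Lemma \ref{lem:sigmatau} together with the fact (recorded just above) that $\sigma$ sends factors of $\mathbf{t}$ to factors of $\mathbf{t}$. First I would settle the base cases $|u|\le 2$ by direct inspection: the three letters $0,1,2$ and all nine words of length two occur in $\mathbf{t}$, so $\mathcal{F}_{\mathbf{t}}(1)$ and $\mathcal{F}_{\mathbf{t}}(2)$ are each trivially closed under the length-preserving operation $u\mapsto\tau_c(u)^R$, and the claim holds there for every $c$.

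For the inductive step, fix $u\in\mathcal{F}_{\mathbf{t}}$ with $n:=|u|\ge 3$ and a colour $c\in\{0,1,2\}$. Write $\mathbf{t}=t_0t_1t_2\cdots$. Because $\mathbf{t}=\sigma(\mathbf{t})$ and $\sigma$ is $2$-uniform (each letter has image of length $2$), any occurrence $u=t_i\cdots t_{i+n-1}$ is covered by the $\sigma$-images of a block of consecutive letters of $\mathbf{t}$; that is, $u\prec\sigma(v)$ for $v:=t_{\lfloor i/2\rfloor}\cdots t_{\lfloor(i+n-1)/2\rfloor}\in\mathcal{F}_{\mathbf{t}}$. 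Applying the induction hypothesis to $v$ with the colour $\overline{c}$ (legitimate because $|v|<n$, to be checked below) gives $\tau_{\overline{c}}(v)^R\in\mathcal{F}_{\mathbf{t}}$. Since $\sigma$ maps this to a factor of $\mathbf{t}$, and Lemma \ref{lem:sigmatau}, with $\overline{c}$ in the role of $c$ and using $\underline{\overline{c}}=c\ (\mathrm{mod}\ 3)$, rewrites it, we obtain
\[
\sigma\bigl(\tau_{\overline{c}}(v)^R\bigr)=\tau_{c}(\sigma(v))^R\in\mathcal{F}_{\mathbf{t}}.
\]
Finally, $\tau_c$ is letter-to-letter, hence commutes with reversal and preserves the factor relation, so $u\prec\sigma(v)$ yields $\tau_c(u)^R\prec\tau_c(\sigma(v))^R$; the displayed membership then forces $\tau_c(u)^R\in\mathcal{F}_{\mathbf{t}}$. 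As $c$ was arbitrary, this closes the induction.

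The delicate point, and the main obstacle, is the desubstitution step $u\prec\sigma(v)$ with $|v|<n$. A short count on the covering block shows $|v|\le\lfloor n/2\rfloor+1$, which is strictly less than $n$ exactly when $n\ge 3$. The borderline case is $n=2$ with an occurrence straddling two $\sigma$-blocks, where $|v|=2$ is not shorter; this is precisely why the lengths $|u|\le 2$ must be dispatched separately as base cases rather than via the recursion. Everything else is bookkeeping that I would verify but not belabour: the identity $\tau_c(w)^R=\tau_c(w^R)$ for the letter-to-letter map $\tau_c$, the monotonicity of $\prec$ under both $\tau_c$ and $\sigma$, and the modular arithmetic $\underline{\overline{c}}=c$ that matches the colour fed to the induction hypothesis with the target colour $c$.
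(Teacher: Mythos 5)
Your proof is correct, and it shares the paper's overall skeleton --- strong induction on $|u|$ with base cases of lengths $1$ and $2$, driven by the commutation identity of Lemma \ref{lem:sigmatau} together with the fact that $\sigma$ maps factors of $\mathbf{t}$ to factors of $\mathbf{t}$ --- but your desubstitution step is genuinely different and cleaner. The paper analyses how $u$ aligns with the $\sigma$-blocks, splitting by the parity of $n$ into the cases $u=a\sigma(v)$, $u=\sigma(v)b$, $u=\sigma(w)$ and $u=a\sigma(v)b$, and in the straddling cases extends $u$ by the specific letters $\underline{a}$, $\overline{b}$ so that the extension becomes an exact $\sigma$-image (e.g.\ $\underline{a}u=\sigma(\underline{a}v)$), to which Lemma \ref{lem:sigmatau} and the inductive hypothesis are applied before passing back to $u$ as a factor of the extension. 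You bypass this case analysis entirely: any occurrence $t_i\cdots t_{i+n-1}$ is covered by whole $\sigma$-blocks, so $u\prec\sigma(v)$ with $v=t_{\lfloor i/2\rfloor}\cdots t_{\lfloor (i+n-1)/2\rfloor}\in\mathcal{F}_{\mathbf{t}}$, and then
\[
\tau_c(u)^R\prec\tau_c(\sigma(v))^R=\sigma\bigl(\tau_{\overline{c}}(v)^R\bigr)\in\mathcal{F}_{\mathbf{t}},
\]
using the inductive hypothesis on $v$ with colour $\overline{c}$, the identity $\underline{\overline{c}}=c$, and the fact that $\prec$ is preserved by the letter-to-letter map $\tau_c$ and by reversal. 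What your route buys is the elimination of the four-case bookkeeping and of the letter-extension trick; what it costs is the slightly weaker length bound $|v|\le\lfloor n/2\rfloor+1$, which you correctly observe decreases strictly exactly when $n\ge 3$, so that lengths $1,2$ must be (and are) handled as base cases. One small point to flag: your length-$2$ base case invokes the fact that all nine words of $\{0,1,2\}^2$ occur in $\mathbf{t}$ (equivalently $\rho_{\mathbf{t}}(2)=9$, consistent with the paper's Appendix~B); this is true, but it is a finite check that reaches moderately deep into $\mathbf{t}$ --- the factor $10$ first occurs at position $27$ --- so ``direct inspection'' here means inspecting a prefix of length about $30$, the same kind of verification the paper itself leaves to the reader.
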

\begin{proof}
When $u\in\mathcal{F}_{\mathbf{t}}(1)\cup\mathcal{F}_{\mathbf{t}}(2)$, the result can be checked directly. Now, suppose the result holds for all  $u\in\cup_{i=1}^{n-1}\mathcal{F}_{\mathbf{t}}(i)$ (where $n\geq 3)$.
Let $u\in\mathcal{F}_{\mathbf{t}}(n)$. If $n$ is odd, then $u=a\sigma(v)$ or $\sigma(v)b$ where $v\in\mathcal{F}_{\mathbf{t}}(\lfloor n/2\rfloor)$ and $a,b\in\{0,1,2\}$, which also imply that $\underline{a}u=\sigma(\underline{a}v)$ or $u\overline{b}=\sigma(vb)$ with $\underline{a}v,vb\in\mathcal{F}_{\mathbf{t}}(\frac{n+1}{2})$. By Lemma \ref{lem:sigmatau}, for $c=0,1,2$,
\[\tau_c(\underline{a}u)^R  =\tau_c(\sigma(\underline{a}v))^R = \sigma(\tau_{\overline{c}}(\underline{a}v)^R).\]
Since $\underline{a}v\in\mathcal{F}_{\mathbf{t}}(\frac{n+1}{2})$, by the inductive hypothesis, $\tau_{\overline{c}}(\underline{a}v)^R\in\mathcal{F}_{\mathbf{t}}(\frac{n+1}{2})$. So $\tau_c(\underline{a}u)^R\in\mathcal{F}_{\mathbf{t}}(n+1)$ and $\tau_c(u)^R\in\mathcal{F}_{\mathbf{t}}(n)$. The same is true for the case $u=\sigma(v)b$.

If $n$ is even, then $u=\sigma(w)$ or $a\sigma(v)b$ where $w\in\mathcal{F}_{\mathbf{t}}(n/2)$, $v\in\mathcal{F}_{\mathbf{t}}(\frac{n}{2}-1)$ and $a,b\in\{0,1,2\}$. When $u=a\sigma(v)b$, we have \(\underline{a}ub=\sigma(\underline{a}vb)\) with $\underline{a}vb\in\mathcal{F}_{\mathbf{t}}(\frac{n}{2}+1)$. By Lemma \ref{lem:sigmatau}, for $c=0,1,2$, \[\tau_c(\underline{a}ub)^R  =\tau_c(\sigma(\underline{a}vb))^R = \sigma(\tau_{\overline{c}}(\underline{a}vb)^R).\]
By the inductive hypothesis, $\tau_{\overline{c}}(\underline{a}vb)^R\in\mathcal{F}_{\mathbf{t}}$. So $\tau_c(\underline{a}ub)^R\in\mathcal{F}_{\mathbf{t}}$ which implies $\tau_c(u)\in \mathcal{F}_{\mathbf{t}}$. When $u=\sigma(w)$, the result follows from Lemma \ref{lem:sigmatau} and the inductive hypothesis in the same way.
\end{proof}

\begin{lemma}\label{lem:inequality}
Let $n\geq 1$ be an integer and $u\in\mathcal{F}_{\mathbf{t}}(n)$.
\begin{enumerate}
\item There exists $x\in\mathcal{F}_{\mathbf{t}}(\lfloor n/2\rfloor)$ such that 
\begin{equation}\label{eq:2-0}
 |x|_1-|x|_0-1 \leq  |u|_2-|u|_0 \leq |x|_1-|x|_0+1.
\end{equation}
\item There exists $y\in\mathcal{F}_{\mathbf{t}}(\lfloor n/2\rfloor)$ such that 
\begin{equation}\label{eq:1-0}
 |y|_1-|y|_2-1 \leq  |u|_1-|u|_0 \leq |y|_1-|y|_2+1.
\end{equation}
\item There exists $z\in\mathcal{F}_{\mathbf{t}}(\lfloor n/2\rfloor)$ such that 
\[ |z|_0-|z|_2-1 \leq  |u|_1-|u|_2 \leq |z|_0-|z|_2+1.\]
\end{enumerate}

\end{lemma}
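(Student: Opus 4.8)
The plan is to prove all three statements by a single \emph{desubstitution} argument, exploiting the fact that $\mathbf{t}=\sigma(\mathbf{t})$, so that each consecutive pair $t_{2k}t_{2k+1}$ equals the block $\sigma(t_k)=t_k\overline{t_k}$. The three statistics $|u|_2-|u|_0$, $|u|_1-|u|_0$ and $|u|_1-|u|_2$ are permuted by $\sigma$ according to Lemma \ref{lem:image_num} (together with the companion identity $|\sigma(u)|_1-|\sigma(u)|_2=|u|_0-|u|_2$, immediate from the letter counts in its proof). Hence it suffices to carry out the argument once, say for statement (1), that is, for $|u|_2-|u|_0$ against $|x|_1-|x|_0$; the other two are identical after a cyclic relabelling of the letters.

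For statement (1) I would first write $u\in\mathcal{F}_{\mathbf{t}}(n)$ in the block-decomposed form already used in Lemma \ref{lem:fab}: if $n$ is odd then $u=a\sigma(v)$ or $u=\sigma(v)b$ with $v\in\mathcal{F}_{\mathbf{t}}(\lfloor n/2\rfloor)$, while if $n$ is even then $u=\sigma(w)$ with $w\in\mathcal{F}_{\mathbf{t}}(n/2)$, or $u=a\sigma(v)b$ with $v\in\mathcal{F}_{\mathbf{t}}(n/2-1)$, where $a,b\in\{0,1,2\}$ are the partial blocks at the two ends. In the aligned case $u=\sigma(w)$, Lemma \ref{lem:image_num} gives $|u|_2-|u|_0=|w|_1-|w|_0$ exactly, so $x=w$ works with zero error. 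In the two single-boundary cases one boundary letter contributes $|a|_2-|a|_0\in\{-1,0,1\}$ (resp. $|b|_2-|b|_0$) on top of $|v|_1-|v|_0$, so taking $x=v$ yields $\bigl||u|_2-|u|_0-(|x|_1-|x|_0)\bigr|\le 1$, as required.

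The delicate case, which I expect to be the main obstacle, is $u=a\sigma(v)b$: here the two partial blocks together could contribute as much as $\pm 2$, and $v$ has the wrong length $n/2-1$ anyway. The key observation is that the preimage word $\underline{a}\,v\,b$ is itself a factor of $\mathbf{t}$ of length $\lfloor n/2\rfloor+1$ — the left partial block completes to $\sigma(\underline{a})=\underline{a}\,a$ and the right one to $\sigma(b)=b\,\overline{b}$ — so both its prefix $\underline{a}\,v$ and its suffix $v\,b$ are factors of length exactly $\lfloor n/2\rfloor$. I would then take $x$ to be whichever of these two completions keeps the boundary discrepancy within $1$: with $x=\underline{a}\,v$ the difference $|u|_2-|u|_0-(|x|_1-|x|_0)$ reduces to a fixed function of the pair $(a,b)$, and likewise for $x=v\,b$, and a direct check of the at most nine pairs $(a,b)$ shows that in every configuration at least one of the two choices is bounded by $1$ in absolute value. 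The mechanism is that completing a block converts a ``reinforcing'' boundary letter into an ``opposing'' one via the substitution identities, so the two candidates never fail simultaneously.

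Finally, statements (2) and (3) follow by repeating this scheme verbatim with the pairs $(|u|_1-|u|_0,\ |x|_1-|x|_2)$ and $(|u|_1-|u|_2,\ |x|_0-|x|_2)$ respectively, using the correspondingly rotated identities of Lemma \ref{lem:image_num}. The block decomposition and the boundary bookkeeping are unchanged, so the same nine-case check — with letters relabelled by the cyclic action of $\sigma$ on the three statistics — closes those cases as well.
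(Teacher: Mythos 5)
Your proof is correct, and for part (1) it is essentially the paper's own argument: the same parity split, the same completion of the misaligned even case $u=a\sigma(v)b$ to $\underline{a}u\overline{b}=\sigma(\underline{a}vb)$, and the same two candidate factors $x=\underline{a}v$ and $x=vb$ of length $n/2$, with a finite check showing that one of them always has boundary error at most $1$ (in the paper's table, $x=\underline{a}v$ works except when $ab\in\{00,12\}$, where $x=vb$ gives error $-1$, resp.\ $+1$). Where you genuinely diverge is parts (2) and (3). The paper does not repeat the desubstitution bookkeeping: it transports part (1) via the mirror maps of Lemma \ref{lem:fab}, writing $|u|_1-|u|_0=|\tau_0(u)^R|_2-|\tau_0(u)^R|_0$, applying \eqref{eq:2-0} to $\tau_0(u)^R\in\mathcal{F}_{\mathbf{t}}$, and pulling the witness back as $y=\tau_1(x)^R$; part (3) is then obtained from part (2) in the same way. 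You instead rerun the whole case analysis for the rotated statistics using the companion identity $|\sigma(u)|_1-|\sigma(u)|_2=|u|_0-|u|_2$. I verified that your nine-case claims do close: for part (2), for instance, $ab\in\{00,12\}$ forces $x=\underline{a}v$ and $ab\in\{11,20\}$ forces $x=vb$, while every other pair admits both choices; part (3) is analogous. So your route is a valid, self-contained alternative: it avoids the mirror machinery of Lemmas \ref{lem:sigmatau} and \ref{lem:fab} (which the paper needs anyway, e.g.\ for Proposition \ref{lem:max_f}), at the price of triplicating a case check that you only sketch. If you want the ``cyclic relabelling'' to be rigorous rather than a promissory note, the cleanest observation is that the rotation $\pi\colon 0\mapsto 1\mapsto 2\mapsto 0$ commutes with $\sigma$, so $\sigma^{\infty}(0)$, $\sigma^{\infty}(1)$, $\sigma^{\infty}(2)$ share the same factor language and $\mathcal{F}_{\mathbf{t}}$ is $\pi$-invariant; then part (2) is exactly part (1) applied to $\pi^{2}(u)$ (with witness $y=\pi(x)$) and part (3) is part (1) applied to $\pi(u)$ (with witness $z=\pi^{2}(x)$), with no new case analysis at all.
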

\begin{proof}
(1) If $n$ is odd, then $u=a\sigma(v)$ or $\sigma(v)b$ where $v\in\mathcal{F}_{\mathbf{t}}(\lfloor n/2\rfloor)$ and $a,b\in\{0,1,2\}$. In either case, 
\begin{align*}
|u|_2-|u|_0 & =
\begin{cases}
|\sigma(v)|_2 - |\sigma(v)|_0 -1, & \text{ if }a,b=0,\\
|\sigma(v)|_2 - |\sigma(v)|_0, & \text{ if }a,b=1,\\
|\sigma(v)|_2 - |\sigma(v)|_0 +1, & \text{ if }a,b=2.
\end{cases} \notag \\
& =
\begin{cases}
|v|_1 - |v|_0 -1, & \text{ if }a,b=0,\\
|v|_1 - |v|_0, & \text{ if }a,b=1,\\
|v|_1 - |v|_0 +1, & \text{ if }a,b=2.
\end{cases} ~~~~~~~{(\text{by Lemma \ref{lem:image_num}})}
\end{align*}
Letting $x=v$, the result follows.

If $n$ is even, then $u=\sigma(w)$ or $a\sigma(v)b$ where $w\in\mathcal{F}_{\mathbf{t}}(n/2)$, $v\in\mathcal{F}_{\mathbf{t}}(\frac{n}{2}-1)$ and $a,b\in\{0,1,2\}$. When $u=\sigma(w)$, by Lemma \ref{lem:image_num}, $|u|_2-|u|_0 = |w|_1 - |w|_0$. Choosing $x=w$, we have the desired result. When $u=a\sigma(v)b$, let $\underline{a}=a-1\, (\mathrm{mod}~3)$ and $\overline{b}=b+1\, (\mathrm{mod}~3)$. Then $\underline{a}u\overline{b}=\sigma(\underline{a}vb)$. By Lemma \ref{lem:image_num}, 
\[|\underline{a}u\overline{b}|_2 - |\underline{a}u\overline{b}|_0
= |\sigma(\underline{a}vb)|_2-|\sigma(\underline{a}vb)|_0 
= |\underline{a}vb|_1 - |\underline{a}vb|_0,\]
which implies
\[|u|_2 - |u|_0
= |\underline{a}vb|_1 - |\underline{a}vb|_0+|\underline{a}\overline{b}|_0-|\underline{a}\overline{b}|_2.\]
When $ab\neq 00$ and $12$, 
\begin{equation*}
|u|_2-|u|_0 = |\underline{a}v|_1-|\underline{a}v|_0 + 
\begin{cases}
-1, & \text{ if } ab = 01, 20,\\
0,  & \text{ if } ab = 02, 10, 21,\\
1,  & \text{ if } ab = 11, 22.
\end{cases}
\end{equation*}
The result holds by choosing $x=\underline{a}v$. When $ab=00$ or $12$,
\begin{equation*}
|u|_2-|u|_0 = |vb|_1-|vb|_0 + 
\begin{cases}
-1, & \text{ if } ab = 00,\\
1,  & \text{ if } ab = 12.
\end{cases}
\end{equation*}
Setting $x=vb$, we are done.

(2) Let $u\in\mathcal{F}_{\mathbf{t}}$. By Lemma \ref{lem:fab}, $\tau_0(u)^R\in\mathcal{F}_{\mathbf{t}}$ and \[|u|_1-|u|_0=|\tau_0(u)^R|_2-|\tau_0(u)^R|_0.\] 
Applying \eqref{eq:2-0} to $\tau_0(u)^R$, we have $x\in\mathcal{F}_{\mathbf{t}}$ such that 
\[|x|_1-|x|_0-1 \leq  |\tau_0(u)^R|_2-|\tau_0(u)^R|_0 \leq |x|_1-|x|_0+1.\]
Let $y=\tau_1(x)^R$. Then, $y\in\mathcal{F}_{\mathbf{t}}$ and $|y|_1-|y|_2=|x|_1-|x|_0$. We have the desired result.

(3) Applying \eqref{eq:1-0} to $\tau_1(u)^R$ and letting $z=\tau_2(y)^R$, the result follows.
\end{proof}

Now we are ready to prove Proposition \ref{prop:dsubounds}.
\begin{proof}[Proof of Proposition \ref{prop:dsubounds}]
For every $n\geq 1$, there exists $k\geq 1$ such that $2^k\leq n < 2^k+1$. Suppose $u\in\mathcal{F}_{\mathbf{t}}(n)$. Let $n_1=n$. By Lemma \ref{lem:inequality}, we have $x^{(1)}\in\mathcal{F}_{\mathbf{t}}(\lfloor n_1/2\rfloor)$ such that
\[ |x^{(1)}|_1-|x^{(1)}|_0-1 \leq |u|_2-|u|_0 \leq |x^{(1)}|_1-|x^{(1)}|_0+1.\]
Let $n_2=\lfloor n_1/2\rfloor$. Apply Lemma \ref{lem:inequality} to $x^{(1)}$, we have
$x^{(2)}\in\mathcal{F}_{\mathbf{t}}(\lfloor n_2/2\rfloor)$ such that 
\[ |x^{(2)}|_1-|x^{(2)}|_2-1 \leq |x^{(1)}|_1-|x^{(1)}|_0 \leq |x^{(2)}|_1-|x^{(2)}|_2+1.\]
Therefore, 
\[ |x^{(2)}|_1-|x^{(2)}|_2-2 \leq |u|_2-|u|_0 \leq |x^{(2)}|_1-|x^{(2)}|_2+2.\]
Let $n_3=\lfloor n_2/2\rfloor$ and apply Lemma \ref{lem:inequality} to $x^{(2)}$. Then we have $x^{(3)}\in\mathcal{F}_{\mathbf{t}}(\lfloor n_3/2\rfloor)$ satisfying
\[ |x^{(3)}|_0-|x^{(3)}|_2-3 \leq |u|_2-|u|_0 \leq |x^{(3)}|_0-|x^{(3)}|_2+3.\]
After applying Lemma \ref{lem:inequality} $k$ times as above, we obtain that
\[-1-\lfloor \log_2 n\rfloor=-1-k \leq |u|_2-|u|_0 \leq 1+k=1+\lfloor \log_2 n\rfloor.\]
Hence \eqref{eq:ulbounds} holds.
\end{proof}

\subsection{Maximal and minimal digit sums}

Let $(d_k)_{k\geq -1}\in\{0,1,2\}^{\infty}$ where
\[ d_k=\begin{cases} 0 & \text{ if } k \equiv3,4 \mod 6, \\ 
1 & \text{ if } k \equiv 1,2 \mod 6, \\ 
2 & \text{ if } k \equiv0,5 \mod 6.
\end{cases} \]
Let $(c_\ell)_{\ell\geq 1}\in\{0,1,2\}^{\infty}$ given by $c_\ell=\ell +1\, (\mathrm{mod}~3)$. Applying Lemma \ref{lem:image_num} several times, it follows that for $k=0,1,2,3,4,5$, 
\begin{equation}\label{eq:d1c0}
|\sigma^{k}(d_k)|_2 - |\sigma^{k}(d_k)|_0 = 1\quad \text{and} \quad|\sigma^{k}(c_k)|_2 - |\sigma^{k}(c_k)|_0 = 0.
\end{equation}
In fact, these equalities hold for all $k\geq 1$.
\begin{lemma}\label{lem:5}
For all $\ell\geq 1$, 
\begin{equation*}
|\sigma^\ell(d_\ell)|_2 - |\sigma^\ell(d_\ell)|_0 = 1 \quad\text{and}\quad
|\sigma^\ell(c_\ell)|_2 - |\sigma^\ell(c_\ell)|_0 = 0.
\end{equation*}
\end{lemma}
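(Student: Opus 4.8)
The plan is to recognize that Lemma \ref{lem:image_num} makes $\sigma$ act \emph{linearly} on the two statistics $p(u):=|u|_2-|u|_0$ and $q(u):=|u|_1-|u|_0$, and then to exploit the fact that this linear action has order $6$. First I would rewrite Lemma \ref{lem:image_num} in these terms: since $|u|_1-|u|_2=q(u)-p(u)$, the two identities there read
\[
p(\sigma(u))=q(u),\qquad q(\sigma(u))=q(u)-p(u),
\]
so that
\[
\begin{pmatrix}p(\sigma(u))\\ q(\sigma(u))\end{pmatrix}
= M\begin{pmatrix}p(u)\\ q(u)\end{pmatrix},
\qquad
M=\begin{pmatrix}0&1\\-1&1\end{pmatrix}.
\]
In particular, for any single letter $a$ and any $\ell\ge 0$ one has $\bigl(p(\sigma^{\ell}(a)),q(\sigma^{\ell}(a))\bigr)^{T}=M^{\ell}\,(p(a),q(a))^{T}$.

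The key step is to compute the order of $M$. Its characteristic polynomial is $\lambda^{2}-\lambda+1$, so by Cayley--Hamilton $M^{2}=M-I$; hence $M^{3}=M\cdot M^{2}=M^{2}-M=-I$ and therefore $M^{6}=I$. Consequently, for every word $w$ we have $p(\sigma^{6}(w))=p(w)$, and more to the point the sequence $\ell\mapsto p(\sigma^{\ell}(a))$ is $6$-periodic for each fixed letter $a$.

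It remains to combine this with the periodicity of the index sequences. The sequence $(d_k)$ is $6$-periodic by its very definition, and $(c_\ell)$ is $3$-periodic, hence also $6$-periodic. Since $M^{6}=I$ and $d_{\ell+6}=d_{\ell}$, $c_{\ell+6}=c_{\ell}$, both expressions
\[
\ell\longmapsto p\bigl(\sigma^{\ell}(d_\ell)\bigr)
\qquad\text{and}\qquad
\ell\longmapsto p\bigl(\sigma^{\ell}(c_\ell)\bigr)
\]
are $6$-periodic in $\ell$. Thus each is determined by its values on a complete residue system modulo $6$, and these are exactly the six base cases recorded in \eqref{eq:d1c0}, namely $p(\sigma^{k}(d_k))=1$ and $p(\sigma^{k}(c_k))=0$ for $k=0,1,2,3,4,5$. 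Therefore $p(\sigma^{\ell}(d_\ell))=1$ and $p(\sigma^{\ell}(c_\ell))=0$ for all $\ell\ge 1$, which is the assertion.

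I do not expect a genuine obstacle here: once the linear model and the relation $M^{3}=-I$ are in place, the argument reduces to a finite check plus periodicity, with no induction needed beyond it. The only point demanding care is the bookkeeping, i.e.\ matching the period $6$ of $M$ against the periods ($6$ and $3$) of $(d_k)$ and $(c_\ell)$, and making sure that the six base cases in \eqref{eq:d1c0} genuinely exhaust every residue class modulo $6$ so that the $6$-periodicity alone closes the argument.
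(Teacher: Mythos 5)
Your proof is correct and takes essentially the same route as the paper: both reduce the claim to the invariance $|\sigma^{6}(u)|_2-|\sigma^{6}(u)|_0=|u|_2-|u|_0$, combined with the $6$-periodicity of $(d_k)$ and $(c_k)$ and the six base cases in \eqref{eq:d1c0}. The only difference is presentational — the paper gets this invariance by applying Lemma \ref{lem:image_num} six times directly, while you derive it more compactly as $M^{3}=-I$, hence $M^{6}=I$, via Cayley--Hamilton.
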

\begin{proof}
Applying Lemma \ref{lem:image_num} six times, one obtain that for every $u\in\{0,1,2\}^{*}$, 
\begin{equation}\label{eq:six}
|\sigma^{6}(u)|_2-|\sigma^{6}(u)|_0=|u|_2-|u|_0.
\end{equation}
For all $\ell\geq 1$, we have $\ell = 6j+k$ where $j\geq 1$ and $k=0,1,2,3,4,5$. Then $d_\ell=d_k$ and $c_\ell=c_k$. By \eqref{eq:d1c0} and \eqref{eq:six},
\begin{align*}
|\sigma^{\ell}(d_\ell)|_2 - |\sigma^{\ell}(d_\ell)|_0 & =  |\sigma^{6j+k}(d_k)|_2 - |\sigma^{6j+k}(d_k)|_0
							= |\sigma^{k}(d_k)|_2 - |\sigma^{k}(d_k)|_0 = 1
\end{align*}
and 
\begin{align*}
|\sigma^{\ell}(c_\ell)|_2 - |\sigma^{\ell}(c_\ell)|_0 & =  |\sigma^{6j+k}(c_k)|_2 - |\sigma^{6j+k}(c_k)|_0 = |\sigma^{k}(c_k)|_2 - |\sigma^{k}(c_k)|_0 = 0.
\end{align*}
We have the desired.
\end{proof}

Now we define a sequence of words $\{W(n)\}_{n\geq 1}$ whose digit sums will attain the upper bound in Proposition \ref{prop:dsubounds}.  

Let $W_1:=2$. For $n \geq 2$, $W_n$ is defined as follows: suppose $2^k \leq n < 2^{k+1}$ for some $k$ and the 2-adic expansion of $n-2^k$ is written as \[(n-2^k)_2 = m_{k-1} \cdots m_2m_1m_0\]  where $m_{j}\in\{0,1\}$ for $j=0,1,\cdots k-1$. 
 Define 
\[W_L(n):=\delta_{m_0}2 \Big(\prod_{i=1}^{\lfloor \tfrac{k-1}{2} \rfloor }\sigma^{2i+m_{2i}}(d_{2i+m_{2i}})\Big) \]
and 
\[W_R(n):=\Big(\prod_{i=\lceil \tfrac{k-1}{2} \rceil}^{1}\sigma^{2i-1+m_{2i-1}}(d_{2i-1+m_{2i-1}}) \Big) 2\]
where $\delta_{m_0}=\varepsilon \text{ if }m_0=0$ and $1$ if $m_0=1$.
Let $W(n) := W_L(n)W_R(n).$

\begin{lemma}\label{lem:max_factor_subset}
For every integer $n$ satisfying $2^k \leq n < 2^{k+1}$ for some $k\geq 0$,  we have 
\begin{enumerate}
\item if $k$ is even, then $W_L(n)\triangleright \sigma^{k}(d_{k+1})$ and $W_R(n)\triangleleft\sigma^{k+1}(d_{k})$,
\item if $k$ is odd, then $W_L(n)\triangleright \sigma^{k+1}(d_{k+2})$ and $W_R(n)\triangleleft\sigma^k(d_{k-1})$.
\end{enumerate}
\end{lemma}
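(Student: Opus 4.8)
The plan is to deduce both assertions from two parallel ``nesting'' identities, proved by induction on the number of blocks in the products: one governing the prefix relation of $W_R(n)$ and one governing the suffix relation of $W_L(n)$. The only facts about $\sigma$ I will need are $\sigma(a)=a\overline{a}$ and $\sigma^2(a)=a\,\overline{a}\,\overline{a}\,\underline{a}$, from which it follows that $\sigma^j(a)$ begins with $a$ and ends with the letter congruent to $a+j$ modulo $3$; consequently, for $N\ge j$ the length-$2^j$ prefix of $\sigma^N(a)$ is $\sigma^j(a)$ and its length-$2^j$ suffix is $\sigma^{j}(a+N-j \bmod 3)$. I will also use the period-$6$ pattern $2,1,1,0,0,2$ of $(d_k)$ to record the letter identities $d_{2i-1}=d_{2i}=\underline{d_{2i-2}}$ and $\overline{d_{2i}}=d_{2i-2}$ (needed on the prefix side) together with $d_{2j}=\underline{d_{2j+3}}$ and $d_{2j+1}=\overline{d_{2j+3}}$ (needed on the suffix side); each is a one-line check.

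For $W_R(n)$ I would prove, by induction on $i\ge 0$, that
\[
\Big(\prod_{i'=i}^{1}\sigma^{2i'-1+m_{2i'-1}}(d_{2i'-1+m_{2i'-1}})\Big)\,2 \ \triangleleft\ \sigma^{2i+1}(d_{2i}),
\]
the base case $i=0$ being $2\triangleleft\sigma(d_0)=20$. Evaluating this at $i=\lceil(k-1)/2\rceil$ gives exactly $W_R(n)\triangleleft\sigma^{k+1}(d_k)$ for even $k$ and $W_R(n)\triangleleft\sigma^{k}(d_{k-1})$ for odd $k$. In the inductive step the identity $d_{2i-1+m_{2i-1}}=d_{2i}$ shows that the leading block $\sigma^{2i-1+m_{2i-1}}(d_{2i})$ coincides with the length-$2^{2i-1+m_{2i-1}}$ prefix of $\sigma^{2i+1}(d_{2i})$; expanding $\sigma^{2i+1}(d_{2i})$ (via $\sigma^2$ when $m_{2i-1}=0$ and via $\sigma$ when $m_{2i-1}=1$) then shows that the factor following this leading block begins with $\sigma^{2i-1}(\overline{d_{2i}})=\sigma^{2i-1}(d_{2i-2})$, into which the induction hypothesis embeds the shorter product as a prefix.

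For $W_L(n)$ I would run the mirror-image induction on $j\ge 0$,
\[
\delta_{m_0}2\,\Big(\prod_{i=1}^{j}\sigma^{2i+m_{2i}}(d_{2i+m_{2i}})\Big) \ \triangleright\ \sigma^{2j+2}(d_{2j+3}),
\]
with base case $\delta_{m_0}2\triangleright\sigma^2(d_3)=0112$. Evaluating at $j=\lfloor(k-1)/2\rfloor$ yields $W_L(n)\triangleright\sigma^{k}(d_{k+1})$ for even $k$ and $W_L(n)\triangleright\sigma^{k+1}(d_{k+2})$ for odd $k$. The step is the exact reflection of the previous one: the suffix identities $d_{2j}=\underline{d_{2j+3}}$ and $d_{2j+1}=\overline{d_{2j+3}}$ show that the trailing block $\sigma^{2j+m_{2j}}(d_{2j+m_{2j}})$ is the length-$2^{2j+m_{2j}}$ suffix of $\sigma^{2j+2}(d_{2j+3})$, and after deleting it the surviving left factor ends with $\sigma^{2j}(\overline{d_{2j+3}})=\sigma^{2j}(d_{2j+1})$, which by the induction hypothesis ends with the shorter word. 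The degenerate values $k=0$ (where $W_L=\varepsilon$, $W_R=2$ against $W_1=2$) and $k=1$ (where one product is empty) are checked directly.

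I expect the sole genuine difficulty to be the seam bookkeeping in the two inductive steps: each time the outermost block is peeled off, one must confirm that the newly exposed prefix of $\sigma^{2i+1}(d_{2i})$ (respectively suffix of $\sigma^{2j+2}(d_{2j+3})$) is precisely the next target $\sigma^{2i-1}(d_{2i-2})$ (respectively $\sigma^{2j}(d_{2j+1})$). This reduces to the short list of congruences among $d_{2i-2},d_{2i-1},d_{2i}$ and their suffix-side analogues, verified in both cases $m=0$ and $m=1$; the verifications are mechanical once the period-$6$ pattern of $(d_k)$ and the first/last-letter rules for $\sigma^j$ are in hand, but they must be carried out separately for each parity of the relevant bit.
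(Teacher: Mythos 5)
Your proposal is correct and takes essentially the same route as the paper: the paper's own proof is also an induction that peels off the highest-scale block $\sigma^{k+m_k}(d_{k+m_k})$ (appended to $W_L$ or prepended to $W_R$ according to parity) and matches it against a prefix or suffix of the target word, using the relation $\sigma^{j+1}(a)=\sigma^{j}(a)\sigma^{j}(\overline{a})$ together with the period-$6$ identities of $(d_k)$. The only difference is organizational: the paper runs one joint induction on $k$ handling $W_L(n)$ and $W_R(n)$ simultaneously via $n'=n-2^{k+1}+2^{k}$, whereas you run two independent inductions on the number of blocks, which amounts to the same seam computations.
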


\begin{proof}
For $k=0,1,2$, the result can be verified directly from the definition of $W_L$ and $W_R$. Suppose the result hold for all $m\leq k$. Now we prove it for $m=k+1$. Let $2^{k+1}\leq n < 2^{k+2}$ with $(n-2^{k+1})_2=m_km_{k-1}\cdots m_1m_0$. Set $n^{\prime}=2^k+\sum_{i=0}^{k-1}m_{i}2^i$.

When $k+1$ is odd, write $k=2\ell$.  Then $W_R(n)=W_R(n^{\prime})\triangleleft\sigma^{k+1}(d_k)$ and 
\begin{align*}
W_L(n) & = \delta_{m_0}2 \Big(\prod_{i=1}^{\ell }\sigma^{2i+m_{2i}}(d_{2i+m_{2i}})\Big)\\
&  = \delta_{m_0}2 \Big(\prod_{i=1}^{\ell-1}\sigma^{2i+m_{2i}}(d_{2i+m_{2i}})\Big) \sigma^{2\ell+m_{2\ell}}(d_{2\ell+m_{2\ell}})\\
& = W_L(n^\prime)\sigma^{k+m_k}(d_{k+m_k}).
\end{align*}
When $m_k=0$, by the induction hypothesis, 
\begin{align*}
W_L(n^\prime)\sigma^{k+m_k}(d_{k+m_k})& \,\triangleright\, \sigma^{k}(d_{k+1})\sigma^{k}(d_k) =\sigma^{k}(d_{k+2})\sigma^{k}(d_k)\\
& = \sigma^{k+1}(d_{k+2}) \,\triangleright\, \sigma^{(k+1)+1}(d_{(k+2)+1}).
\end{align*}
When $m_k=1$, by the induction hypothesis,
\begin{align*}
W_L(n^\prime)\sigma^{k+m_k}(d_{k+m_k})& \,\triangleright\, \sigma^{k}(d_{k+1})\sigma^{k+1}(d_{k+1}) \,\triangleright\, \sigma^{k+1}(d_{k+3})\sigma^{k+1}(d_{k+1})\\
& = \sigma^{(k+1)+1}(d_{(k+1)+2}).
\end{align*}
So, $W_L(n)\triangleright\sigma^{(k+1)+1}(d_{(k+1)+2})$.

When $k+1$ is even, write $k=2\ell+1$. Then $W_L(n)=W_L(n^{\prime})\triangleright\sigma^{k+1}(d_{k+2})$ and 
\begin{align*}
W_R(n) & = \sigma^{k+m_k}(d_{k+m_k})W_R(n^{\prime}) \\
& \,\triangleleft\, \begin{cases}
\sigma^k(d_k)\sigma^k(d_{k-1})=\sigma^{k+1}(d_{k+1}), & \text{if }m_k=0,\\
\sigma^{k+1}(d_{k+1})\sigma^k(d_{k-1}), & \text{if }m_k=1,
\end{cases}\\
& \,\triangleleft\,\sigma^{k+2}(d_{k+1}).
\end{align*}
This completes the induction.
\end{proof}

\begin{proposition}\label{lem:max_f}
For all $n\geq 1$, $W(n)\in\mathcal{F}_{\mathbf{t}}$ and $\tau_1(W(n))^R\in\mathcal{F}_{\mathbf{t}}$. Moreover, 
\[\mathrm{DS}(W(n))=n+\lfloor\log_2n\rfloor+1\, \text{ and } \,\mathrm{DS}(\tau_1(W(n))^R)=n-\lfloor\log_2n\rfloor-1.\]
\end{proposition}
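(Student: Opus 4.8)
The plan is to prove the four assertions in the natural order: first I would record a merging identity for the blocks, then establish $|W(n)|=n$ and $W(n)\in\mathcal{F}_{\mathbf{t}}$, then compute the two digit sums, and finally deduce everything about $\tau_1(W(n))^R$ from the corresponding facts for $W(n)$. The engine of the argument is the elementary identity $\sigma(d_j)=d_j d_{j-2}$, which holds for every $j$ because the definition of $(d_k)$ together with $\sigma$ forces $d_{j-2}\equiv d_j+1\pmod 3$. This lets me rewrite any product of two consecutive blocks, such as $\sigma^{k}(d_{k+1})\,\sigma^{k+1}(d_k)=\sigma^{k}\bigl(d_{k+1}\,\sigma(d_k)\bigr)=\sigma^{k}(d_{k+1}d_kd_{k-2})$, as a single $\sigma^k$-image of a short word. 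The base case $n=1$ (that is $k=0$, where $W(1)=2$) is checked directly.

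For $W(n)\in\mathcal{F}_{\mathbf{t}}$ I would sandwich $W(n)$ between the two large blocks supplied by Lemma \ref{lem:max_factor_subset}. When $k$ is even, $W_L(n)$ is a suffix of $\sigma^{k}(d_{k+1})$ and $W_R(n)$ a prefix of $\sigma^{k+1}(d_k)$, so $W(n)=W_L(n)W_R(n)$ occurs as the factor of $\sigma^{k}(d_{k+1})\,\sigma^{k+1}(d_k)$ straddling the block boundary. By the merging identity this product equals $\sigma^{k}(d_{k+1}d_kd_{k-2})$, and since $\sigma$ maps factors of $\mathbf{t}$ to factors of $\mathbf{t}$, it suffices to verify that the parent word $d_{k+1}d_kd_{k-2}$ lies in $\mathcal{F}_{\mathbf{t}}$. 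By the period-$6$ structure of $(d_k)$ there are only three residues of $k$ to test, and each parent word turns out to be one of $012,120,201$, all of which occur in $\mathbf{t}$; the odd case is identical with parent word $d_{k+2}d_kd_{k-1}$. The companion membership $\tau_1(W(n))^R\in\mathcal{F}_{\mathbf{t}}$ is then immediate from Lemma \ref{lem:fab} applied with $c=1$.

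For the numerical claims I would first confirm $|W(n)|=n$: since $\sigma$ is $2$-uniform each block $\sigma^{\ell}(d_\ell)$ has length $2^{\ell}$, and writing $2^{\ell+m}=2^{\ell}(1+m)$ for $m\in\{0,1\}$ turns $|W(n)|$ into $m_0+2+\sum_{j=1}^{k-1}2^{j}+\sum_{j=1}^{k-1}m_j2^{j}$, where the block exponents of $W_L$ and $W_R$ together run over $\{1,\dots,k-1\}$ exactly once; this telescopes to $2^{k}+(n-2^{k})=n$. Then \eqref{eq:dsn20} gives $\mathrm{DS}(W(n))=n+|W(n)|_2-|W(n)|_0$, and I count this difference block by block: the two isolated letters $2$ contribute $+1$ each, the letter $\delta_{m_0}$ contributes $0$, and every block $\sigma^{\ell}(d_\ell)$ contributes $+1$ by Lemma \ref{lem:5}. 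As there are $\lfloor(k-1)/2\rfloor+\lceil(k-1)/2\rceil=k-1$ such blocks, $|W(n)|_2-|W(n)|_0=k+1=\lfloor\log_2 n\rfloor+1$, whence $\mathrm{DS}(W(n))=n+\lfloor\log_2 n\rfloor+1$. Finally, since $\tau_1$ exchanges the letters $0$ and $2$ while fixing $1$, and the mirror preserves all letter counts and the length, one has $|\tau_1(W(n))^R|_2-|\tau_1(W(n))^R|_0=|W(n)|_0-|W(n)|_2=-(k+1)$, which via \eqref{eq:dsn20} yields $\mathrm{DS}(\tau_1(W(n))^R)=n-\lfloor\log_2 n\rfloor-1$.

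I expect the main obstacle to be the bookkeeping in the membership step: correctly peeling the common prefactor $\sigma^{k}$ off the two sandwiching blocks through the merging identity, tracking the parity of $k$ and the index shifts so that the residual parent word is genuinely of length $3$, and confirming the finitely many parent words are factors of $\mathbf{t}$. Once that is in place, the length and digit-sum computations are purely additive accounting, and the two statements about $\tau_1(W(n))^R$ follow formally from the action of $\tau_1$ and $(\cdot)^R$ on letter counts together with Lemma \ref{lem:fab}.
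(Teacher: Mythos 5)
Your proposal is correct and follows essentially the same route as the paper: sandwich $W(n)$ via Lemma \ref{lem:max_factor_subset} inside a $\sigma^k$-image of a short word coming from the merging identity $\sigma(d_j)=d_jd_{j-2}$, check that the finitely many parent words are factors of $\mathbf{t}$, get $\tau_1(W(n))^R\in\mathcal{F}_{\mathbf{t}}$ from Lemma \ref{lem:fab}, and compute $|W(n)|_2-|W(n)|_0=\lfloor\log_2 n\rfloor+1$ block by block via Lemma \ref{lem:5}, concluding with \eqref{eq:dsn20} and the $0\leftrightarrow 2$ swap under $\tau_1$. The only (immaterial) differences are that you verify the length-$3$ parent words $\{012,120,201\}$ directly where the paper folds once more to the length-$2$ words $\{21,10,02\}$, and that you make explicit the bookkeeping $|W(n)|=n$ that the paper leaves implicit.
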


\begin{proof}
For all $n$ with $2^k \leq n < 2^{k+1}$, by Lemma \ref{lem:max_factor_subset}, 
\[W(n)=W_L(n)W_R(n)\prec \sigma^k(d_{k+2}d_kd_{k-2)} \prec\sigma^{k+1}(d_{k+2}d_{k-2}).\] Since $(d_\ell)_{\ell\geq 1}$ is periodic, $d_{k+2}d_{k-2}\in\{21,10,02\}\subset\mathcal{F}_\mathbf{t}(2)$. Hence $W(n)\in\mathcal{F}_{\mathbf{t}}$. By Lemma \ref{lem:fab}, we know $\tau_1(W(n))^R\in\mathcal{F}_{\mathbf{t}}$.  

According to the definition of $W_L$ and $W_R$,  
\begin{align*}
|W(n)|_2-|W(n)|_0 &= 2+ \sum_{i=1}^{k-1} (|\sigma^{i+m_i}(d_{i+m_i})|_2-|\sigma^{i+m_i}(d_{i+m_i})|_0)\\ 
&= 2+k-1 \qquad\qquad\text{(by Lemma \ref{lem:5})}\\
&= \lfloor \log_2(n)\rfloor+1.
\end{align*}
Then the results follow from \eqref{eq:dsn20} and the definition of $\tau_1$.
\end{proof}

\subsection{Accessible values of digit sums}
We shall prove the following intermediate value property of digit sums of all the factors of length $n$. 
\begin{proposition}\label{prop:3}
For all $n\geq 1$ and all integer $k$ satisfying $n-\lfloor \log_2 n\rfloor-1< k < n+\lfloor \log_2 n\rfloor+1$, there exists $u\in\mathcal{F}_{\mathbf{t}}(n)$ such that $\mathrm{DS}(u)=k$.
\end{proposition}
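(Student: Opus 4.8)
The plan is to reformulate the statement. By \eqref{eq:dsn20} we have $\mathrm{DS}(u)=n+(|u|_2-|u|_0)$, so Proposition \ref{prop:3} is equivalent to the assertion that the set
\[A(n):=\{|u|_2-|u|_0 : u\in\mathcal{F}_{\mathbf{t}}(n)\}\]
contains every integer in the open interval $(-\lfloor\log_2 n\rfloor-1,\ \lfloor\log_2 n\rfloor+1)$. Since Proposition \ref{prop:dsubounds} gives $A(n)\subseteq[-\lfloor\log_2 n\rfloor-1,\lfloor\log_2 n\rfloor+1]$ and Proposition \ref{lem:max_f} shows that both endpoints $\pm(\lfloor\log_2 n\rfloor+1)$ lie in $A(n)$, it suffices to prove that $A(n)$ has no gaps, i.e. that $A(n)$ is a block of consecutive integers. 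I would prove this by induction on $n$.

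A preliminary remark makes the three count-differences interchangeable. For each $c$ the map $u\mapsto \tau_c(u)^R$ sends $\mathcal{F}_{\mathbf{t}}(n)$ into itself (Lemma \ref{lem:fab}) and is an involution, hence a length-preserving bijection of $\mathcal{F}_{\mathbf{t}}(n)$; tracking letter counts through $\tau_0$ and $\tau_1$ then yields
\[A(n)=\{|u|_1-|u|_0:u\in\mathcal{F}_{\mathbf{t}}(n)\}=\{|u|_1-|u|_2:u\in\mathcal{F}_{\mathbf{t}}(n)\}.\]
Thus I may realise a prescribed value through whichever of the three differences is convenient. Writing $K=\lfloor\log_2 n\rfloor$, for $n\geq 2$ one has $\lfloor\log_2\lfloor n/2\rfloor\rfloor=K-1$, so by the induction hypothesis (together with Propositions \ref{prop:dsubounds} and \ref{lem:max_f} at length $\lfloor n/2\rfloor$) the set $\{|v|_1-|v|_0:v\in\mathcal{F}_{\mathbf{t}}(\lfloor n/2\rfloor)\}$ is exactly $\{-K,\dots,K\}$.

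The even case $n=2m$ is then immediate: given a target $t\in\{-K,\dots,K\}$, choose $v\in\mathcal{F}_{\mathbf{t}}(m)$ with $|v|_1-|v|_0=t$; by Lemma \ref{lem:image_num}, $\sigma(v)\in\mathcal{F}_{\mathbf{t}}(2m)$ satisfies $|\sigma(v)|_2-|\sigma(v)|_0=t$. Hence $\{-K,\dots,K\}\subseteq A(2m)$, and adjoining the endpoints $\pm(K+1)$ from Proposition \ref{lem:max_f} shows $A(2m)=\{-K-1,\dots,K+1\}$, a full interval. No control of extensions is needed here, since $\sigma(v)$ is already a factor of the required length.

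The odd case $n=2m+1$ is where I expect the real difficulty, and it is the main obstacle. A factor of length $2m+1$ has the shape $a\sigma(v)$ or $\sigma(v)b$ with $v\in\mathcal{F}_{\mathbf{t}}(m)$, so relative to $\sigma(v)$ its value $|u|_2-|u|_0$ differs from $|v|_1-|v|_0$ by exactly the $\{-1,0,1\}$-contribution of the single extra letter. The trouble is that this extra letter is \emph{not} free: it is forced by the occurrence of $v$ in $\mathbf{t}$, so one cannot simply append a $1$ (contribution $0$) to a $v$ of value $t$ and conclude. To overcome this I would control the one-letter extensions directly: every length-one factor is right-special with all three continuations, and propagating this fact through $\sigma$ should show that for each $t\in\{-K,\dots,K\}$ there is a $v\in\mathcal{F}_{\mathbf{t}}(m)$ together with a legal one-letter extension whose combined value is exactly $t$. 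An alternative, more constructive route -- which would also re-prove the even case uniformly -- is to interpolate on the extremal word $W(n)$ of Proposition \ref{lem:max_f}: starting from $|W(n)|_2-|W(n)|_0=K+1$, successively replace one block $\sigma^{j}(d_{j})$ (contributing $+1$ by Lemma \ref{lem:5}) by $\sigma^{j}(c_{j})$ (contributing $0$), and thereafter by the corresponding $\tau$-mirrored block (contributing $-1$); each single replacement lowers the value by exactly one, sweeping through every integer from $K+1$ down to $-(K+1)$. The price of this second route is that one must verify, by an induction parallel to Lemma \ref{lem:max_factor_subset}, that every intermediate word remains a factor of $\mathbf{t}$; checking the block-boundary compatibility of these mixed $d/c$ products is the technical heart of the argument.
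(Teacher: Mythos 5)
Your reduction to showing that $A(n)=\{|u|_2-|u|_0 : u\in\mathcal{F}_{\mathbf{t}}(n)\}$ is an interval of integers is sound, the $\tau_c(\cdot)^R$ symmetry argument is correct, and your even case is complete and clean: pushing a value-$t$ factor of length $m$ through $\sigma$ via Lemma \ref{lem:image_num} fills $\{-K,\dots,K\}$ inside $A(2m)$, with the endpoints $\pm(K+1)$ supplied by Proposition \ref{lem:max_f}. But the odd case is a genuine gap, and you essentially say so yourself: neither of your two routes is carried out, and neither is routine. For the first route, the fact that every length-one factor extends by all three letters is true (all nine words of length $2$ occur in $\mathbf{t}$), but it does not propagate into what you need. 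Since the extra letter in $a\sigma(v)$ or $\sigma(v)b$ is forced, hitting a target $t$ requires \emph{joint} control of the value $|v|_1-|v|_0$ and of a legal extension letter; your induction hypothesis records only the set of achievable values, which is strictly weaker, and nothing in the sketch shows that among the value-$t$ factors of length $m$ there is one admitting the particular extension (say by $1$, contributing $0$) that you need. Closing this would require a substantially strengthened induction hypothesis, which you neither state nor prove. For the second route, whether a mixed product of blocks $\sigma^{j}(d_{j})$, $\sigma^{j}(c_{j})$ and mirrored blocks is still a factor of $\mathbf{t}$ is precisely the kind of statement for which Lemma \ref{lem:max_factor_subset} needed a careful induction on block-boundary compatibility; there is no reason to expect it survives arbitrary substitutions, and you do not verify it. So the proposal proves the statement only for even $n$ assuming it at length $\lfloor n/2\rfloor$, and the induction does not close.

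For comparison, the paper does not split by parity at all. It proves (Lemma \ref{lem:bigodd}) that for $n>128$ every factor of non-maximal digit sum admits another factor of the same length with digit sum exactly one larger, then climbs from the minimal word $\tau_1(W(n))^R$ to the maximum; the cases $n\le 128$ are checked by computer. The engine there is a shift argument: sliding a length-$n$ window along $\mathbf{t}$ changes the digit sum by $t_{j+n}-t_j\in\{0,\pm 1,\pm 2\}$, and the only obstruction is a jump of $2$ (a $0$ leaving while a $2$ enters) — exactly the same forced-letter obstruction that blocks your odd case. The paper resolves it with a finite, computer-verified statement about $\sigma^{6}$-images of length-$3$ factors (Lemma \ref{lem:tech}). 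Some such finite verification, or a genuinely strengthened induction hypothesis, appears unavoidable; your proposal contains neither, so the main difficulty remains open.
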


Before proving Proposition \ref{prop:3}, we first study the behavior of digit sums during the shift (to the right). Denote by $\mathcal{I}(u)$ the set of all the indexes (or positions) of occurrences of $u,$ i.e., for every $i \in  \mathcal{I}(u),$ $t_{i}t_{i+1}\cdots t_{i+n-1} = u.$ Since $\mathbf{t}$ is uniformly recurrent, $\mathcal{I}(u)$ is an infinite set for all $u\in \mathcal{F}_{\mathbf{t}}$. For every $i \in \mathcal{I}(u)$, set
\[ r_i(u) = \min\{j>i:g_n(j)>\mathrm{DS}(u)\}, \]
where $g_n(j):=\mathrm{DS}(t_j t_{j+1}\cdots t_{j+n-1})$.
Set $\min\emptyset=-\infty$. 

\begin{lemma}\label{lem:mindiff}
Let $u\in\mathcal{F}_{\mathbf{t}}(n)$. If $\mathrm{DS}(u)\neq \mathrm{DS}(W(n))$, then $r_i(u)$ is finite and \[g_n(r_i(u))-\mathrm{DS}(u)=1\text{ or }2.\] Moreover, if $g_n(r_i(u))-\mathrm{DS}(u)=2$, then $g_n(r_i(u)-1)=\mathrm{DS}(u)$.
\end{lemma}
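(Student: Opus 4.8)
The plan is to reduce everything to the elementary behaviour of $g_n$ under a single right-shift, and to isolate the one genuinely structural ingredient (recurrence) that forces $r_i(u)$ to be finite. First I would record how $g_n$ changes in one step: since $g_n(j)=\mathrm{DS}(t_jt_{j+1}\cdots t_{j+n-1})$, passing from $j$ to $j+1$ deletes $t_j$ and appends $t_{j+n}$, so
\[
g_n(j+1)-g_n(j)=t_{j+n}-t_j.
\]
As every letter lies in $\{0,1,2\}$, this increment is an integer in $[-2,2]$; in particular it never exceeds $2$, and it equals $2$ only when $t_{j+n}=2$ and $t_j=0$. This single bound drives the whole argument.

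Next I would establish that $r_i(u)$ is finite, i.e.\ that $\{j>i:g_n(j)>\mathrm{DS}(u)\}$ is nonempty. By Proposition \ref{prop:dsubounds} together with Proposition \ref{lem:max_f}, the value $\mathrm{DS}(W(n))=n+\lfloor\log_2 n\rfloor+1$ is the maximal digit sum among all factors of length $n$, so the hypothesis $\mathrm{DS}(u)\neq\mathrm{DS}(W(n))$ in fact gives $\mathrm{DS}(u)<\mathrm{DS}(W(n))$. Since $W(n)\in\mathcal{F}_{\mathbf{t}}$ (Proposition \ref{lem:max_f}) and $\mathbf{t}$ is uniformly recurrent, $W(n)$ occurs at infinitely many positions; choosing an occurrence at some $j>i$ yields $g_n(j)=\mathrm{DS}(W(n))>\mathrm{DS}(u)$. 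Hence the set is nonempty, and being a set of integers bounded below by $i+1$, it has a least element, so $r_i(u)$ is a genuine finite index.

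Then I would pin down the size of the first jump above $\mathrm{DS}(u)$. By minimality of $r_i(u)$, every index strictly between $i$ and $r_i(u)$ satisfies $g_n\le\mathrm{DS}(u)$, and $g_n(i)=\mathrm{DS}(u)$ because $i\in\mathcal{I}(u)$; so in all cases $g_n(r_i(u)-1)\le\mathrm{DS}(u)$. Combining this with the one-step bound $g_n(r_i(u))\le g_n(r_i(u)-1)+2$ gives $g_n(r_i(u))\le\mathrm{DS}(u)+2$, while the defining inequality $g_n(r_i(u))>\mathrm{DS}(u)$ forces $g_n(r_i(u))\ge\mathrm{DS}(u)+1$. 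Thus $g_n(r_i(u))-\mathrm{DS}(u)\in\{1,2\}$. For the \emph{moreover} clause, suppose this difference equals $2$, so $g_n(r_i(u))=\mathrm{DS}(u)+2$. Since $g_n(r_i(u))-g_n(r_i(u)-1)\le 2$ and $g_n(r_i(u)-1)\le\mathrm{DS}(u)$, both inequalities must be equalities, and therefore $g_n(r_i(u)-1)=\mathrm{DS}(u)$.

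The only place using real structural input is the finiteness step, which rests on uniform recurrence of $\mathbf{t}$ and on the fact (Proposition \ref{lem:max_f}) that $W(n)$ realizes the maximal digit sum; once a maximal-digit-sum window is guaranteed to appear to the right of every occurrence of $u$, the remaining assertions follow purely from the increment range $\{-2,\dots,2\}$. Consequently I do not expect any substantial obstacle beyond invoking recurrence correctly: the jump bounds and the \emph{moreover} clause are immediate arithmetic consequences of the single-step estimate.
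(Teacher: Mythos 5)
Your proposal is correct and follows essentially the same route as the paper's proof: both deduce $\mathrm{DS}(u)<\mathrm{DS}(W(n))$ from Propositions \ref{prop:dsubounds} and \ref{lem:max_f}, obtain finiteness of $r_i(u)$ from uniform recurrence of $\mathbf{t}$ via an occurrence of $W(n)$ to the right of $i$, and then use the one-step increment bound $g_n(j+1)-g_n(j)=t_{j+n}-t_j\in\{0,\pm1,\pm2\}$ together with $g_n(r_i(u)-1)\le\mathrm{DS}(u)$ to get the jump size and the \emph{moreover} clause (which the paper proves contrapositively, a purely cosmetic difference).
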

\begin{proof}
By Proposition \ref{prop:dsubounds} and \ref{lem:max_f}, if $\mathrm{DS}(u)\neq \mathrm{DS}(W(n))$, then $\mathrm{DS}(u)<\mathrm{DS}(W(n))$.  For any occurrence of $u$, say $t_i t_{i+1}\cdots t_{i+n-1}=u$, since $\mathbf{t}$ is uniformly recurrent, there exists $j>i$ such that $t_j t_{j+1}\cdots t_{j+n-1}=W(n)$. Thus, $r_i(u)<j$.

Now suppose $r_i(u)$ is finite. Write $k:=r_i(u)$. Then, $g_n(k-1)\leq \mathrm{DS}(u)$. Since \[g_n(k)-g_n(k-1)=t_{k+n-1}-t_{k-1}\in\{0,\pm 1,\pm 2\}\] and  $g_n(k)>\mathrm{DS}(u)\geq g_n(k-1)$, we know that  $g_n(k)-\mathrm{DS}(u)=1$ or $2$. 
Moreover, if $g_n(k-1)<\mathrm{DS}(u)$, then $0<g_n(k)-\mathrm{DS}(u)<g_n(k)-g_n(k-1)\leq 2$ which implies $g_n(k)-\mathrm{DS}(u)=1$.
\end{proof}

The key to prove Proposition \ref{prop:3} is to figure out how many times we need to do the shift in order to increase the digit sum of a given factor by $1$. The following two lemmas deal with the problem. The first one is a technical lemma. Let $u,v\in\mathcal{F}_{\mathbf{t}}(3)$. Write $\sigma^6(u)=u_0u_1\cdots u_{191}$ and $\sigma^6(v)=v_0v_1\cdots v_{191}$. For $64\leq i,j< 128$ satisfying $u_i=0$ and $v_j=2$, $0<m<192-\max\{i,j\}$ and $0<p\leq \min\{i,j\}$, set
\begin{align*}
R(u,v,i,j,m) & = \sum_{\ell=0}^{m}(v_{j+\ell}-u_{i+\ell}), \\
L(u,v,i,j,-p) & = \sum_{\ell=1}^{p}(u_{i-\ell}-v_{j-\ell}).
\end{align*}

\begin{lemma}\label{lem:tech}
For all $u,v\in\mathcal{F}_{\mathbf{t}}(3)$ and $64\leq i,j< 128$ satisfying $u_i=0$ and $v_j=2$, $R(u,v,i,j,m)=1$ for some $0<m<192-\max\{i,j\}$ or $L(u,v,i,j,-p)=1$ for some $0<p\leq \min\{i,j\}$.
\end{lemma}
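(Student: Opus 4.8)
\section*{Proof proposal for Lemma \ref{lem:tech}}

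The plan is to reinterpret $R$ and $L$ as the right and left partial sums of a single integer walk and to reduce the statement to a discrete intermediate-value property. Write $\sigma^6(u)=u_0\cdots u_{191}$, $\sigma^6(v)=v_0\cdots v_{191}$ and set $D_\ell:=v_{j+\ell}-u_{i+\ell}$ for every offset $\ell$ with $0\le i+\ell\le 191$ and $0\le j+\ell\le 191$. Let $C_\ell:=\sum_{k\le \ell}D_k$ be the cumulative walk. A direct rearrangement gives $R(u,v,i,j,m)=C_m-C_{-1}$ and $L(u,v,i,j,-p)=C_{-p-1}-C_{-1}$, while the hypotheses $u_i=0$, $v_j=2$ force $D_0=2$, i.e.\ $C_0=C_{-1}+2$. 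Hence the conclusion is equivalent to: the walk $C$ attains the level $C_{-1}+1$ at some offset $\ell\in\{1,\dots,191-\max\{i,j\}\}\cup\{-2,-3,\dots,-\min\{i,j\}-1\}$, that is, at some offset other than $-1$ and $0$. Since each step $D_\ell$ lies in $\{0,\pm1,\pm2\}$, the walk can \emph{avoid} the level $C_{-1}+1$ only by crossing it with a step of $\pm2$, which occurs exactly at offsets where $\{u_{i+\ell},v_{j+\ell}\}=\{0,2\}$.

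Second, I would exploit the self-similarity $\mathbf{t}=\sigma^6(\mathbf{t})$ together with the constraint $64\le i,j<128$. Because $\sigma^6(u)=\sigma^6(u^{(0)})\sigma^6(u^{(1)})\sigma^6(u^{(2)})$ splits into three blocks of length $64$, the positions $i,j$ lie in the \emph{central} blocks $\sigma^6(u^{(1)})$, $\sigma^6(v^{(1)})$; thus $u_i$ (resp.\ $v_j$) is a prescribed letter inside $\sigma^6$ of a single letter, and its left/right neighbourhoods are governed by the finitely many length-$3$ factors in $\mathcal{F}_{\mathbf{t}}(3)$ surrounding that block. This has two consequences. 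First, there is genuine room: at least $64$ valid offsets on each side, since $\min\{i,j\}\ge 64$ and $191-\max\{i,j\}\ge 64$. Second, using that $\sigma$ commutes with the letterwise map $x\mapsto\overline{x}$ (immediate from $\sigma(x)=x\overline{x}$), the number of essentially distinct environments $(u,v,i,j)$ one must examine is sharply reduced. Combining this with \eqref{eq:six}, which records that $\sigma^6$ preserves $|\cdot|_2-|\cdot|_0$ and hence pins the net displacement of $C$ across a full block to $\{-1,0,1\}$, turns the statement into a finite verification.

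Third, the heart of the argument is to rule out the \emph{double skip}: the walk avoiding $C_{-1}+1$ on the right (offsets $\ge 1$) \emph{and} on the left (offsets $\le -2$). On the right the walk starts at $C_0=C_{-1}+2$; to miss the level $C_{-1}+1$ forever, every descent below $C_{-1}+2$ must be the jump $C_{-1}+2\mapsto C_{-1}$ by a $-2$ step (i.e.\ $u_{i+\ell}=2$, $v_{j+\ell}=0$) and every ascent its reverse, and symmetrically on the left starting from $C_{-1}$. Thus a double skip forces the coincidences $\{u_{i+\ell},v_{j+\ell}\}=\{0,2\}$ to recur in a rigid alternating pattern on both sides over a span of at least $64$. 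The main obstacle, and the real content of the lemma, is to show this is incompatible with the admissible factors of $\mathbf{t}$: because $\sigma^6(0),\sigma^6(1),\sigma^6(2)$ are letterwise rotations of one another and the letters obey the run constraints inherited from $\sigma$, two such windows cannot keep producing opposite extremes $0$ versus $2$ precisely at the crossing offsets. I expect this final incompatibility to be the delicate step, handled by a structured case analysis over the central-block environments (reduced via the $\overline{\cdot}$-symmetry and the room afforded by $64\le i,j<128$) rather than by brute-force enumeration.
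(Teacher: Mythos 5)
Your first paragraph is a correct and genuinely nice reformulation: writing $D_\ell=v_{j+\ell}-u_{i+\ell}$, one indeed has $R(u,v,i,j,m)=C_m-C_{-1}$ and $L(u,v,i,j,-p)=C_{-p-1}-C_{-1}$, the hypothesis $u_i=0$, $v_j=2$ gives the jump $C_0=C_{-1}+2$, and since every step lies in $\{0,\pm1,\pm2\}$, the walk can avoid the level $C_{-1}+1$ on both sides only if every crossing of that level is a $\pm2$ step, i.e.\ occurs at an offset where $\{u_{i+\ell},v_{j+\ell}\}=\{0,2\}$. This correctly isolates what must be ruled out: a ``double skip'' pattern on both sides of offset $0$.

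However, there is a genuine gap: that exclusion is the entire content of the lemma, and you do not prove it. Your third paragraph describes the rigid alternating pattern a double skip would force and then states that you ``expect'' its incompatibility with the factors of $\mathbf{t}$ to follow from ``a structured case analysis over the central-block environments'' --- but that case analysis is never carried out, so the proposal stops exactly where the real work begins. Two supporting claims in your second paragraph are also shaky: the net displacement of $C$ across a full length-$64$ block is $(|b|_2-|b|_0)-(|a|_2-|a|_0)\in\{-2,\dots,2\}$, not $\{-1,0,1\}$; and since $i$ and $j$ need not be congruent modulo $64$, the two windows' block structures are generally misaligned, so an argument organized around common ``central blocks'' does not directly apply. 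For comparison, the paper makes no attempt at a structural argument: it observes that $u,v,i,j,m,p$ all range over explicit finite sets and verifies the statement exhaustively by computer (pseudocode in Appendix A). That is a complete proof, if an unilluminating one; your walk reformulation could well be the right frame for a conceptual proof, but as written it is an unfinished program, not a proof, and the honest fallback for the missing step would be precisely the finite enumeration you set out to avoid.
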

\begin{proof}
Since the choices of variables of both $L$ and $R$ are finite, the result can be verified exhaustively. (This can be easily checked by a computer. We give the pseudocode for the corresponding procedures in Appendix A.) 
\end{proof}

\begin{lemma}\label{lem:bigodd}
Let $n> 128$. For every $u\in\mathcal{F}_{\mathbf{t}}(n)$ with $\mathrm{DS}(u)\neq \mathrm{DS}(W(n))$, there exists $z\in\mathcal{F}_{\mathbf{t}}(n)$ satisfying 
$\mathrm{DS}(z)-\mathrm{DS}(u)=1$.
\end{lemma}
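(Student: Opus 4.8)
The plan is to run a discrete intermediate-value argument on the digit-sum walk $g_n$, using Lemma \ref{lem:mindiff} to collapse the statement to a single awkward configuration and then Lemma \ref{lem:tech} to break it. First I would fix $u \in \mathcal{F}_{\mathbf{t}}(n)$ and set $s := \mathrm{DS}(u)$. By Proposition \ref{prop:dsubounds} and Proposition \ref{lem:max_f} we have $s \le \mathrm{DS}(W(n)) = n + \lfloor \log_2 n\rfloor + 1$, and since $s \ne \mathrm{DS}(W(n))$ in fact $s < \mathrm{DS}(W(n))$, so $s+1$ still lies in the admissible range. Choosing any occurrence of $u$ at position $i$ and setting $k = r_i(u)$, Lemma \ref{lem:mindiff} tells me $k$ is finite and $g_n(k) - s \in \{1,2\}$. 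If $g_n(k)-s=1$, then $z := t_k\cdots t_{k+n-1}$ satisfies $\mathrm{DS}(z) = s+1$ and I am done. So I may assume the opposite for every occurrence of $u$: every first crossing is a jump of $+2$, whence Lemma \ref{lem:mindiff} yields a position $k$ with $g_n(k-1) = s$, $g_n(k) = s+2$, forcing $t_{k-1} = 0$ and $t_{k+n-1} = 2$. Writing $M = t_k\cdots t_{k+n-2}$, this produces the two genuine factors $0M$ (digit sum $s$) and $M2$ (digit sum $s+2$), and the task becomes to manufacture a length-$n$ factor of digit sum exactly $s+1$ out of this $0/2$ configuration.

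Second, I would move the problem into the self-similar structure $\mathbf{t} = \sigma^6(\mathbf{t})$. Parsing $\mathbf{t}$ into the blocks $\sigma^6(t_r)$ of length $64$, I locate the leaving letter $t_{k-1} = 0$ inside $\sigma^6(u')$ and the entering letter $t_{k+n-1} = 2$ inside $\sigma^6(v')$ for suitable length-$3$ factors $u', v'$, using uniform recurrence to select occurrences in which both distinguished letters fall in the central block, i.e.\ at positions $i, j$ with $64 \le i, j < 128$, $u'_i = 0$ and $v'_j = 2$. This is exactly the hypothesis of Lemma \ref{lem:tech}, and it is here that the bound $n > 128$ enters: a window of length $n$ then comfortably contains such a central block together with enough of one neighbouring block to carry out the adjustment, since the indices $m$ and $p$ furnished by Lemma \ref{lem:tech} never exceed $128$.

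Third, I would apply Lemma \ref{lem:tech}. It returns either a rightward index $m$ with $R(u',v',i,j,m) = 1$ or a leftward index $p$ with $L(u',v',i,j,-p) = 1$; in either case one reads off two equal-length factors of $\mathbf{t}$, one from the $0$-context and one from the $2$-context, whose digit sums differ by exactly $1$. Reattaching the common context that the two windows share on the side opposite to the extension (the left context in the $R$ case, the right context in the $L$ case) should upgrade $0M$ into a length-$n$ factor $z \in \mathcal{F}_{\mathbf{t}}(n)$ with $\mathrm{DS}(z) = s+1$, the required word.

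The main obstacle is precisely this last gluing step. One must check that replacing the selected sub-block changes the digit sum by exactly the partial sum $R$ or $L$, that is, by $1$ and nothing more, which forces the untouched parts of the two windows to contribute identical digit sums; and, more delicately, that the spliced word is genuinely a factor of $\mathbf{t}$. Establishing factor-preservation requires controlling how the central $\sigma^6$-block sits inside a length-$n$ window, treating the right-extension ($R$) and left-extension ($L$) cases separately, and tracking the parity of the block carrying the distinguished letter. Lemma \ref{lem:tech} supplies the finite, computer-verifiable guarantee that a $+1$ adjustment is always available in at least one of the two directions; the real content left for the proof is to convert that guarantee into an honest factor of $\mathbf{t}$ of the correct length and digit sum.
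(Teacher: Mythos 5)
Your overall route is the paper's: reduce via Lemma \ref{lem:mindiff} to the configuration where the first crossing jumps by $2$, i.e.\ a window $w=t_{k-1}\cdots t_{k+n-2}$ with $\mathrm{DS}(w)=\mathrm{DS}(u)$, $t_{k-1}=0$, $t_{k+n-1}=2$; then place the two distinguished letters in the middle blocks of $\sigma^6$-images of length-$3$ factors and invoke Lemma \ref{lem:tech}. The gap is in the last step, which you explicitly leave open (``the real content left for the proof''), and it stems from misreading what $R$ and $L$ measure. They are not digit sums of two detached factors that must afterwards be spliced together: since $u'_{i+\ell}=t_{k-1+\ell}$ and $v'_{j+\ell}=t_{k+n-1+\ell}$, one has $R(u',v',i,j,m)=\sum_{\ell=0}^{m}\bigl(t_{k+n-1+\ell}-t_{k-1+\ell}\bigr)=g_n(k+m)-g_n(k-1)$, and likewise $L(u',v',i,j,-p)=\sum_{\ell=1}^{p}\bigl(t_{k-1-\ell}-t_{k+n-1-\ell}\bigr)=g_n(k-1-p)-g_n(k-1)$. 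In words, $R$ and $L$ are exactly the increments of the digit sum of the \emph{sliding} length-$n$ window of $\mathbf{t}$, because the letters entering on one side and leaving on the other are precisely the ones counted in their definitions.

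Consequently, when Lemma \ref{lem:tech} yields $R=1$ (resp.\ $L=1$), the required word is simply $z=t_{k+m}\cdots t_{k+m+n-1}$ (resp.\ $z=t_{k-1-p}\cdots t_{k+n-2-p}$): it is a contiguous window of $\mathbf{t}$, hence a factor by definition, and $\mathrm{DS}(z)=\mathrm{DS}(u)+1$. The word you build by ``reattaching the common context'' is exactly this shifted window --- the common part and the extension occupy adjacent positions of $\mathbf{t}$ --- so there is no factor-preservation issue, no parity of blocks to track, and the ``untouched part'' contributes identically for free, since it consists of literally the same positions of $\mathbf{t}$ in both windows. Two lesser corrections: uniform recurrence is not what places the distinguished letters in the central blocks (that is automatic once you take one full block of context on each side); it enters only through Lemma \ref{lem:mindiff}, to make $r_i(u)$ finite. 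Also, one should fix the occurrence with $i\geq 2^8$ so that the left shifts $k-1-p$ stay nonnegative; and the hypothesis $n>128$ together with $m,p\leq 127<n$ is what lets the $0$-side and $2$-side contexts be described by two independently chosen length-$3$ words with all indices in range. With the sliding-window identity in place, your argument closes and coincides with the paper's proof.
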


\begin{proof}
Let $i\in\mathcal{I}(u)$ with $i\geq 2^8$. Set $j=r_i(u)-1$. By Lemma \ref{lem:mindiff}, if $g_n(r_i(u))-\mathrm{DS}(u)=1$, then we are done. If $g_n(r_i(u))-\mathrm{DS}(u)=2$, then $g_n(j)=\mathrm{DS}(u)$ which also implies $t_{j} = 0$ and $t_{j+n}=2$. Write $w=t_{j}t_{j}\cdots t_{j+n-1}$.

The word $w$ has the following decomposition:
\[ w = (t_{j}t_{j+1}\cdots t_{j+\ell-1})\sigma^{6}(v)(t_{j+n-r}\cdots t_{j+n-1}) \prec\sigma^6(xvy)\]
where $v \in \mathcal{F}_t$, $t_{j}t_{j+1}\cdots t_{j+\ell-1}\triangleright\sigma^{6}(x)$ and $t_{j+n-r}\cdots t_{j+n-1}\triangleleft\sigma^{6}(y)$ for some $x, y \in\{0,1,2\}$. Note that $\ell,r \leq 64.$ Further, we have \[w\prec\sigma^6(bxvyd)\] where $v \in \mathcal{F}_t,$ $bx,yd\in\mathcal{F}_{\mathbf{t}}(2)$ and $bxvyd\in\mathcal{F}_{\mathbf{t}}$. Let $\tilde{j}=j\, (\mathrm{mod}~ 64)$ and $j^\prime = j+n-1\, (\mathrm{mod}~ 64)$. Let $a,c\in\{0,1,2\}$ with $a\triangleleft v$ and $c\triangleright v$. Then,
\begin{align*}
g_n(j+m)-\mathrm{DS}(w) & = R(bxa,cyd,\tilde{j}+64,j^\prime+64,m),\\
g_n(j-p)-\mathrm{DS}(w) & = L(bxa,cyd,\tilde{j}+64,j^\prime+64,p).
\end{align*}
By Lemma \ref{lem:tech}, one of the following is true:
\begin{enumerate}
\item
$g_n(j+m)-\mathrm{DS}(w) = 1$ for some $0<m<192-\max\{i,j\}$;
\item
$g_n(j-p)-\mathrm{DS}(w) = 1$ for some $0<p\leq \min\{i,j\}$.
\end{enumerate}
Setting $z=t_{j+m}t_{j+m+1}\cdots t_{j+m+n-1}$ or 
$z=t_{j-p}t_{j-p+1}\cdots t_{j-p+n-1}$, we have the desired.
\end{proof}

Now we prove the intermediate value property of digit sums.
\begin{proof}[Proof of Proposition \ref{prop:3}]
For every $n>128$, starting from $\tau_1(W(n))^R$ and applying Lemma \ref{lem:bigodd} $2\lfloor \log_2n\rfloor+1$ times, the result follows. For $1\leq n\leq 128$, the result can be verified exhaustively. (This has been done by a computer. We provide the pseudocode for the corresponding procedure in Appendix B.)
\end{proof}

\begin{definition}
Let $\mathcal{A}\subset\mathbb{Z}$ be a finite set. An infinite sequence $\mathbf{u}=u_0u_1u_2\cdots\in\mathcal{A}^{\infty}$ is said to satisfy the \emph{intermediate value property} of digit sums if there exists an integer $N$ such that for every integer $n\geq N$ and every integer $i$ satisfying 
\[\min_{u \in \mathcal{F}_{\mathbf{u}}(n)} \mathrm{DS}(u)\leq  i \leq \max_{u \in \mathcal{F}_{\mathbf{u}}(n)}\mathrm{DS}(u) ,\] there exists a factor $v \in \mathcal{F}_{\mathbf{u}}(n)$ such that  $\mathrm{DS}(v) = i.$
\end{definition}
According to Proposition \ref{prop:3}, the Thue-Morse like sequence $\mathbf{t}$ has the intermediate value property of digit sums. It is easy to know that the intermediate value property holds for all $\mathbf{w} \in \{0,1\}^{\mathbb{N}}$. However, this property does not always hold. The following is a counter example.

Let the alphabet $\mathcal{A}=\{a,b,c\}$ and the morphism $\sigma_3: a\mapsto abc,~ b\mapsto bca,~ c\mapsto cab$. Consider the automatic sequence  $\mathbf{w}=\sigma_3^{\infty}(a)$ generated by $\sigma_3$. It follows from \cite{KK2017} that its abelian complexity $\{\rho_{\mathbf{w}}(n)\}_{n\geq 3}=766766766\cdots.$ 

Recall that $\Psi_{\mathbf{w}}(n)$ is the set of the Parikh vectors of all the factors of length $n$ of $\mathbf{w}$. Following from the proof of \cite[Proposition 4.1]{KK2017}, it is not hard to check the following proposition. 

\begin{proposition}\label{prop:4}
Let $\mathbf{w}=\sigma_3^{\infty}(a)$ with $\sigma_3: a\mapsto abc,~ b\mapsto bca,~ c\mapsto cab$ and set $\mathbb{I}=(1,1,1)$. For every integer $n \geq 3$ where $n=3m+r$ for some $r=0,1,2$, we have 
\begin{itemize}
\item if $r =0,$ then \[\Psi_{\mathbf{w}}(n)=m\mathbb{I}+
\{(1,0,-1),(0,0,0),(1,-1,0),(0,1,-1),(-1,1,0),(-1,0,1),(0,-1,1)\}. \]
\item if $r =1,$ then  \[\Psi_{\mathbf{w}}(n)=m\mathbb{I}+
\{(1,1,-1),(1,-1,1),(0,1,0),(1,0,0),(0,0,1),(-1,1,1)\}. \]
\item if $r =2,$ then \[\Psi_{\mathbf{w}}(n)=m\mathbb{I}+
\{(2,0,0),(0,2,0),(0,0,2),(0,1,1),(1,1,0),(1,0,1)\}. \]
\end{itemize}

\end{proposition}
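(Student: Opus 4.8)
The plan is to follow the strategy of \cite[Proposition 4.1]{KK2017} and exploit the fact that $\sigma_3$ is a $3$-uniform morphism sending each letter to a permutation of the block $abc$. The central observation is the \emph{balance property}: since $\sigma_3(a)=abc$, $\sigma_3(b)=bca$, $\sigma_3(c)=cab$ each contain exactly one occurrence of every letter, we have $\psi(\sigma_3(u))=|u|\,\mathbb{I}$ for all $u\in\{a,b,c\}^{*}$. Because $\mathbf{w}=\sigma_3(\mathbf{w})$ is a fixed point of a uniform morphism, $\mathbf{w}$ decomposes uniquely into consecutive length-$3$ blocks $\sigma_3(w_0)\sigma_3(w_1)\cdots$, the $q$-th block being $\sigma_3$ applied to the $q$-th letter of $\mathbf{w}$.

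First I would fix $n=3m+r$ and an arbitrary $v\in\mathcal{F}_{\mathbf{w}}(n)$ occurring at a position $i\equiv\rho\pmod 3$. Using the block decomposition, $v$ factors as $v=s\,\sigma_3(u)\,p$, where $s$ is a suffix of one block and $p$ a prefix of another, with $|s|=(3-\rho)\bmod 3$ and $|p|$ forced by $|s|+3|u|+|p|=n$, so that $|p|\equiv r-|s|\pmod 3$. The balance property then gives
\[ \psi(v)=\psi(s)+|u|\,\mathbb{I}+\psi(p)=m\,\mathbb{I}+\bigl(\psi(s)+\psi(p)-c\,\mathbb{I}\bigr), \]
where $c\in\{0,1\}$ according to whether $|s|+|p|=r$ or $|s|+|p|=r+3$ (the only possibilities since $|s|,|p|\in\{0,1,2\}$). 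Thus the deviation of $\psi(v)$ from $m\,\mathbb{I}$ depends only on the pair of boundary pieces $(s,p)$, and the three residues $\rho=0,1,2$ correspond precisely to the three admissible length-pairs $(|s|,|p|)$ attached to each $r$.

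Next I would carry out the (finite) enumeration. The length-$1$ suffixes and prefixes of the three blocks realize each single-letter Parikh vector, while the length-$2$ ones realize each of $(0,1,1),(1,0,1),(1,1,0)$; substituting these into the displayed formula and ranging over all choices of source letters for $s$ and $p$ produces, for each $r$, exactly the list in the statement. This simultaneously yields the inclusion $\subseteq$: every deviation so computed lies in the claimed set irrespective of which letters actually occur, so no Parikh vector outside the list can arise.

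The reverse inclusion $\supseteq$, namely the realizability of every listed vector, is the main obstacle. For a given target one must exhibit a genuine factor, i.e. a position at which the required pair $(s,p)$ of boundary pieces occurs; since $s$ and $p$ originate from the letters $w_q$ and $w_{q+|u|+1}$, this reduces to showing that the prescribed ordered pair of (first letter, last letter) is attained by some factor of $\mathbf{w}$ of length $|u|+2$. I would settle this using primitivity and uniform recurrence of $\mathbf{w}$: for all sufficiently large lengths every ordered letter-pair is realized as the (first,last) letters of a factor, so every boundary configuration appearing in the enumeration genuinely occurs and all listed vectors are attained. The finitely many small $n$ below the resulting threshold would be verified directly. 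Assembling the two inclusions over the three offset classes gives the stated description of $\Psi_{\mathbf{w}}(n)$.
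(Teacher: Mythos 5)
Your decomposition framework and the boundary enumeration are sound: writing each factor as $v=s\,\sigma_3(u)\,p$, using $\psi(\sigma_3(u))=|u|\,\mathbb{I}$, and listing the admissible $(s,p)$ configurations does reproduce exactly the three displayed sets, so the inclusion $\subseteq$ is fine. (For comparison: the paper gives no self-contained proof at all; it simply asserts the proposition "follows from the proof of Proposition 4.1" of \cite{KK2017}, so your attempt goes beyond what the authors actually wrote, and its first half is in the same spirit as that reference.)

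The gap is in the direction $\supseteq$, which you yourself flag as the main obstacle. Your reduction is correct and genuinely unavoidable: for $r=2$, for instance, the vector $m\mathbb{I}+(2,0,0)$ arises from exactly one boundary configuration (suffix $a$ of $\sigma_3(b)$, prefix $a$ of $\sigma_3(a)$), i.e.\ from a factor of length $m+2$ beginning with $b$ and ending with $a$; there is no alternative configuration to fall back on. But the principle you invoke to finish --- that primitivity and uniform recurrence force every ordered letter-pair to occur as the (first, last) letters of a factor of every sufficiently large length --- is false. The fixed point $(ab)^{\omega}$ of the primitive morphism $a\mapsto ab$, $b\mapsto ab$ is uniformly recurrent, yet at every even length only the pairs $(a,b)$ and $(b,a)$ occur; likewise $(abc)^{\omega}$ is the fixed point of a primitive morphism on three letters and realizes only $3$ of the $9$ pairs at each length. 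So pair-realizability must be extracted from the specific structure of $\mathbf{w}$, not from these dynamical generalities. It is in fact true here: identifying $a,b,c$ with $0,1,2$ one has $w_i\equiv s_3(i)\pmod 3$, where $s_3$ is the base-$3$ digit sum, and since $s_3(i+d)=s_3(i)+s_3(d)-2c$ with $c$ the number of carries, realizing a pair $(\alpha,\beta)$ at distance $d$ amounts to prescribing $c$ modulo $3$ while adjusting high-order digits of $i$ to fix $s_3(i)$; a short explicit construction does this for every $d\geq 1$. But this carry argument (or an equivalent induction on the $\sigma_3$-block structure) is precisely the missing content of your proof, and without it the realizability half of the proposition is unproved.
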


\begin{example}
Given a coding $\tau:a\mapsto x, b\mapsto y, c\mapsto z$ where $x<y<z \in \mathbb{Z},$  consider the infinite word $\mathbf{w}^{\prime} = \tau(\mathbf{w})$ with $\mathbf{w}$ is defined in Proposition \ref{prop:4}. We will discuss the sum set of all the factors of length $n$ in $\mathbf{w}^{\prime}$ by the value of $r.$
\begin{itemize}
\item If $r=0,$ then we have
\begin{eqnarray*}
\{ \text{DS}(u): u\in  \mathcal{F}_{\mathbf{w}^{\prime}}(n)\} &=& \{<(x,y,z), \psi(n)>: \psi(n) \in \Psi_{\mathbf{w}}(n)\}\\
&=&x+y+z+\{x-z,y-z,x-y,0,y-x,z-y,z-x\}
\end{eqnarray*}
\item If $r=1,$ then we have
\begin{eqnarray*}
\{ \text{DS}(u): u\in  \mathcal{F}_{\mathbf{w}^{\prime}}(n)\} &=& \{<(x,y,z), \psi(n)>: \psi(n) \in \Psi_{\mathbf{w}}(n)\}\\
&=&x+y+z+\{x+y-z,y+x-z,x+z-y,x,y,z\}
\end{eqnarray*}
\item If $r=2,$ then we have
\begin{eqnarray*}
\{ \text{DS}(u): u\in  \mathcal{F}_{\mathbf{w}^{\prime}}(n)\} &=& \{<(x,y,z), \psi(n)>: \psi(n) \in \Psi_{\mathbf{w}}(n)\}\\
&=&x+y+z+\{x+y,y+x,x+z,2x,2y,2z\}
\end{eqnarray*}
\end{itemize}
Hence the intermediate value property of $\mathbf{w}^{\prime}$ holds if and only if $z=y+1=x+2.$
It follows that there are many infinite words such as  $\tau(\mathbf{w})$ with $\tau:a\mapsto 0, b\mapsto 1, c\mapsto 3$ in which the intermediate value property fails.
\end{example}

As a consequence, we have the following interesting questions.
\begin{question}
Given a finite set $\mathcal{A}\subset\mathbb{Z}$, under what condition an infinite sequence $\mathbf{w}\in \mathcal{A}^{\mathbb{N}}$ will have the intermediate value property?
\end{question}
In addition, it follows from Proposition \ref{prop:3} and Lemma \ref{lem:image_num} that for the Thue-Morse like sequence $\mathbf{t},$  we have for every $n\geq 1,$
\[ \max_{a, b\in \{0,1,2\}}\max_{u \in \mathcal{F}_{\mathbf{t}}(n)}\{|u|_a-|u|_b\} = 2\lfloor \log_2 n\rfloor+3. \] 
On the basis of the above equation,  for an infinite word $\mathbf{w}$ on a finite alphabet $\mathcal{A}$, we can define a measure for the evenness degree of the factors of length $n$ in  $\mathbf{w}:$
\[ E_{\mathbf{w}}(n):=\max_{a, b\in \mathcal{A}}\max_{u \in \mathcal{F}_{\mathbf{w}}(n)}\{|u|_a-|u|_b\}. \]
Note that the evenness function $E_{\mathbf{w}}(n)$ is different with the balance function $B_{\mathbf{t}}(n)$ which is first introduced in \cite{A03}. For every primitive morphic word $\mathbf{w}$, Adamczewski \cite{A03} showed that the asymptotic behaviour of the balance function $B_{\mathbf{w}}(n)$  is in part ruled by the spectrum of the incidence matrix associated with the substitution. Here we have the following question.
\begin{question}
How about the asymptotic behaviour of the evenness function $E_{\mathbf{w}}(n)$ for the primitive morphic word $\mathbf{w}$?
\end{question}

\section*{Acknowledgement} 
This work was supported by  NSFC (No. 11701202), the Fundamental Research Funds for the Central Universities (Nos. 2017MS110, 2662015QD016) and the Characteristic innovation project of colleges and universities in Guangdong (No. 2016KTSCX007).

\section*{Appendix A: Pseudocode for Lemma \ref{lem:tech}.}
\makeatletter
\def\BState{\State\hskip-\ALG@thistlm}
\makeatother
\begin{algorithm}[H]
\begin{algorithmic}[1]
\Procedure{RightShiftTimes}{}
\BState \textbf{Input:} $u,v,i,j$ with $64\leq i,j< 128$ satisfying $u_i=0$ and $v_j=2$
\BState \textbf{Output:} $m$ or false
\State $lword \gets \sigma^6(u)$
\State $rword \gets \sigma^6(v)$
\State $m \gets 0$
\State $s \gets 0$
\While{$m \leq 192-\max(i, j)$} 
\State $s \gets s + rword(j+m)-lword(i+m)$
\If {$s = 1$}
\State \Return $m$
\EndIf
\State $m \gets m+1$
\EndWhile
\State \Return false
\EndProcedure

\Procedure{LeftShiftTimes}{}
\BState \textbf{Input:} $u,v,i,j$ with $64\leq i,j< 128$ satisfying $u_i=0$ and $v_j=2$
\BState \textbf{Output:} $-p$ or false
\State $lword \gets \sigma^6(u)$
\State $rword \gets \sigma^6(v)$
\State $p \gets 1$
\State $s \gets 0$
\While{$p \leq \min(i, j)$}
\State $s \gets s + lword(i-p)-rword(j-p)$
\If {$s = 1$}
\State \Return $-p$
\EndIf
\State $p \gets p+1$
\EndWhile
\State \Return false
\EndProcedure
\end{algorithmic}
\caption{For every input $u,v,i,j$ which is present in Lemma \ref{lem:tech}, the outputs of two following procedures can not be both false.}
\end{algorithm}

\section*{Appendix B: Pseudocode for Proposition \ref{prop:3} for $1\leq n\leq 128$.}

Since $\mathbf{t}$ is uniformly recurrent, for every positive integer $n$, there exits an integer $R(n)>n$  such that for every $u \in \mathcal{F}_{\mathbf{t}}(n)$,  we have $u \prec t_0\cdots t_{R(n)}.$ At the mean time, using the analogue of the proof of \cite[Proposition 5.1.9]{F02}, we can show the subword complexity function $\rho_{\mathbf{t}}(n)$: $\rho_{\mathbf{t}}(1)=3, \rho_{\mathbf{t}}(2)=9,$ and for $n \geq 3,$
\[\left\{
\begin{aligned}
\rho_{\mathbf{t}}(2n) &= \rho_{\mathbf{t}}(n)+\rho_{\mathbf{t}}(n+1),\\
\rho_{\mathbf{t}}(2n+1) &= 2\rho_{\mathbf{t}}(n+1).
\end{aligned} \right.\]
Hence it is possible to find the index $R(n)$ for every $1\leq n\leq 128$ with the help of a computer.
\begin{algorithm}[H]
\begin{algorithmic}[1]
\Procedure{HaveDesiredDigitSum}{}
\BState \textbf{Input:} $n,k$ with $1\leq n\leq 128$ and $n-\lfloor \log_2 n \rfloor-1 < k < n+\lfloor \log_2 n \rfloor+1$
\BState \textbf{Output:} true or false
\State $i \gets 0$
\While{$i \leq R(n)-n+1$} 
\State $ds \gets 0$
\State $j \gets i$
\While{$j\leq i+n-1$} 
\State $ds \gets ds+t_j$
\State $j \gets j+1$
\EndWhile
\If {$ds = k$}
\State \Return true
\EndIf
\State $i \gets i+1$
\EndWhile
\State \Return false
\EndProcedure
\end{algorithmic}
\caption{For every input $n,k$, the output of the following procedure always be true.}
\end{algorithm}

\section*{References}

\end{document}